\newcommand\be{\begin{equation}}
\newcommand\ee{\end{equation}}
\newcommand\bea{\begin{eqnarray}}
\newcommand\eea{\end{eqnarray}}
\newcommand\bi{\begin{itemize}}
\newcommand\ei{\end{itemize}}
\newcommand\ben{\begin{enumerate}}
\newcommand\een{\end{enumerate}}
\newtheorem{thm}{Theorem}[section]
\newtheorem{cor}[thm]{Corollary}
\newtheorem{lem}[thm]{Lemma}
\newtheorem{prop}[thm]{Proposition}
\newcommand{\twocase}[5]{#1 \begin{cases} #2 & \text{#3}\\ #4
&\text{#5} \end{cases}   }
\newcommand{\threecase}[7]{#1 \begin{cases} #2 &
\text{#3}\\ #4 &\text{#5}\\ #6 &\text{#7} \end{cases}   }
\newcommand{\fourcase}[9]{#1 \begin{cases} #2 &
\text{#3}\\ #4 &\text{#5}\\ #6 &\text{#7}\\ #8 &\text{#9} \end{cases}   }
\newcommand{\Z}{\ensuremath{\mathbb{Z}}}
\newcommand{\N}{\mathbb{N}}
\numberwithin{equation}{section}
\begin{document}

\title{Coordinate sum and difference sets of $d$-dimensional modular hyperbolas}

\author{Amanda Bower}
\email{amandarg@umd.umich.edu}\address{Department of Mathematics
and Statistics, University of Michigan-Dearborn, Dearborn, MI 48128}

\author{Ron Evans}
\email{revans@math.ucsd.edu}\address{Department of Mathematics, University of California San Diego, La Jolla, CA  92093}

\author{Victor Luo}
\email{victor.d.luo@williams.edu}\address{Department of Mathematics and Statistics, Williams College, Williamstown, MA 01267}

\author{Steven J. Miller}
\email{sjm1@williams.edu, Steven.Miller.MC.96@aya.yale.edu} \address{Department of Mathematics and Statistics, Williams College, Williamstown, MA 01267}


\subjclass[2010]{11P99, 14H99 (primary), 11T23 (secondary)}

\keywords{Modular hyperbolas, coordinate sumset, coordinate difference set}

\date{\today}

\thanks{We thank Mizan R. Khan for introducing us to this problem and for helpful discussions and comments on an earlier draft. The first and third named authors were supported by Williams College and NSF grant DMS0850577; the fourth named author was partially supported by NSF grant DMS0970067.}

\begin{abstract}
Many problems in additive number theory, such as Fermat's last theorem and
the twin prime conjecture, can be understood by examining sums or differences of a set
with itself. A finite set $A \subset \mathbb{Z}$ is considered
sum-dominant if $|A+A|>|A-A|$. If we consider all subsets of $\{0, 1, \dots, n-1\}$, as $n\to\infty$ it is natural to expect that almost all subsets should be difference-dominant, as addition is commutative but subtraction is not; however, Martin and O'Bryant in 2007 proved that a positive percentage are sum-dominant as $n\to\infty$.

This motivates the study of ``coordinate sum dominance''.
Given $V \subset (\Z/n\Z)^2$, we call
$S:=\{x+y: (x,y) \in V\}$ a coordinate sumset and $D:=\{x-y: (x,y) \in V\}$
a coordinate difference set, and we say
$V$ is coordinate sum dominant if $|S|>|D|$.
An arithmetically interesting choice of $V$ is
$\bar{H}_2(a;n)$, which is the reduction modulo $n$ of the modular hyperbola
$H_2(a;n) := \{(x,y): xy \equiv a \bmod n, 1 \le x,y < n\}$.
In 2009, Eichhorn, Khan, Stein, and
Yankov determined the sizes of $S$ and $D$ for $V=\bar{H}_2(1;n)$
and investigated conditions for coordinate sum dominance.
We extend their results to reduced $d$-dimensional modular
hyperbolas $\bar{H}_d(a;n)$ with $a$ coprime to $n$.
\end{abstract}

\maketitle

\tableofcontents

\section{Introduction}
Let $A \subset \N \cup \{0\}$. Two natural sets to study are
\bea
A + A & \ = \ & \{ x+y: x, y \in A \} \nonumber \\
A - A & \ = \ & \{ x-y: x, y \in A \}.
\eea
The former is called the sumset and the latter the difference set. Many problems in additive number theory can be understood in terms of sum and difference sets. For instance, the Goldbach conjecture says that the even numbers greater than 2 are a subset of $P+P$, where $P$ is the set of primes. The twin prime conjecture states that there are infinitely many ways to write 2 as a difference of primes (and thus if $P_N$ is the set of primes exceeding $N$, $P_N-P_N$ always contains 2). If we let $A_n$ be the set of positive $n$\textsuperscript{th} powers, then Fermat's Last Theorem says $(A_n + A_n) \cap A_n = \emptyset$ for all $n > 2$.

Let $|S|$ denote the cardinality of a set $S$. A set $A$ is sum dominant if $|A+A| > |A-A|$. We might expect that almost all sets are difference dominant since addition is commutative while subtraction is not. However, in 2007 Martin and O'Bryant \cite{MO} proved that a positive percentage of sets are sum dominant; i.e., if we look at all subsets of $\{0, 1, \dots, n-1\}$ then as $n\to\infty$ a positive percentage are sum dominant. One explanation is that choosing $A$ uniformly from $\{0, 1, \dots,  n-1\}$ is equivalent to taking each element from 0 to $n-1$ to be in $A$ with probability 1/2. By the Central Limit Theorem this implies that there are approximately $n/2$ elements in a typical $A$, yielding on the order of $n^2/4$ pairs whose sum must be one of $2n-1$ possible values. On average we thus have each possible value realized on the order of $n/8$ ways. It turns out most possible sums and differences are realized (the expected number of missing sums and differences are 10 and 6, respectively). Thus most sets are close to being balanced, and we just need a little assistance to push a set to being sum-dominant. This can be done by carefully controlling the fringes of $A$ (the elements near 0 and $n-1$). Such constructions are the basis of numerous
results in the field; see for example \cite{ILMZ,MO,MOS,MPR,Zh1,Zh2}.

This motivates the study of ``coordinate sum dominance'' on fringeless sets
such as $(\Z/n\Z)^2$.
Given $V \subset (\Z/n\Z)^2$, we call
$S:=\{x+y: (x,y) \in V\}$ a coordinate sumset and $D:=\{x-y: (x,y) \in V\}$
a coordinate difference set, and we say
$V$ is coordinate sum dominant if $|S|>|D|$.
An arithmetically interesting choice of $V$ is
$\bar{H}_2(a;n)$, which is the reduction modulo $n$ of the modular hyperbola
\be \label{modhyp}
H_2(a;n) := \{(x,y): xy \equiv a \bmod n, 1 \le x,y < n\},
\ee
where $(a,n)=1$.
Eichhorn, Khan, Stein, and Yankov \cite{EKSY} determined the
cardinalities of $S$ and $D$ for $V=\bar{H}_2(1;n)$
and investigated conditions for coordinate sum dominance.
See \cite{MV} for additional results on related problems
in other modular settings.



The modular hyperbolas in \eqref{modhyp} have very interesting structure,
as is evidenced in Figure \ref{fig:modhyp1}. See Figures 1 through 4 of
\cite{EKSY} for additional examples.


\begin{center}
\begin{figure}
\includegraphics[scale=.65]{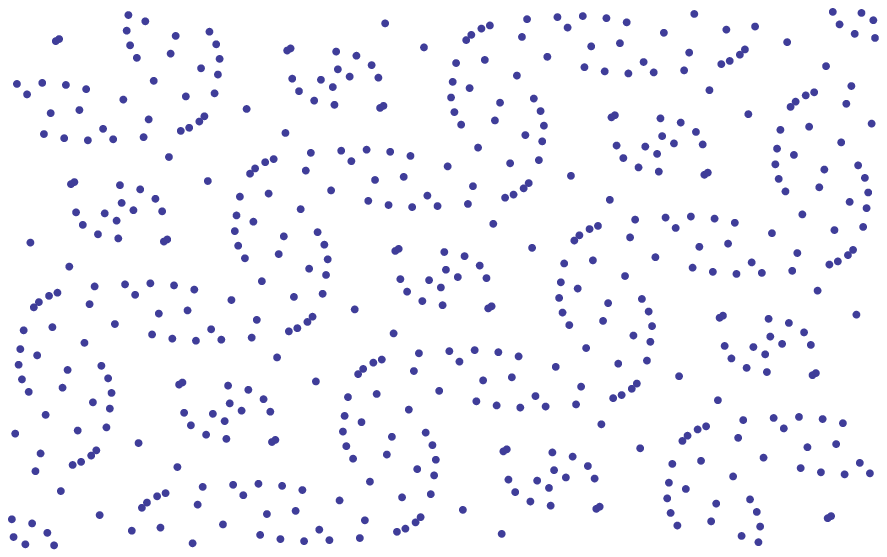}\ \ \ \ \ \ \ \ \ \
\includegraphics[scale=.65]{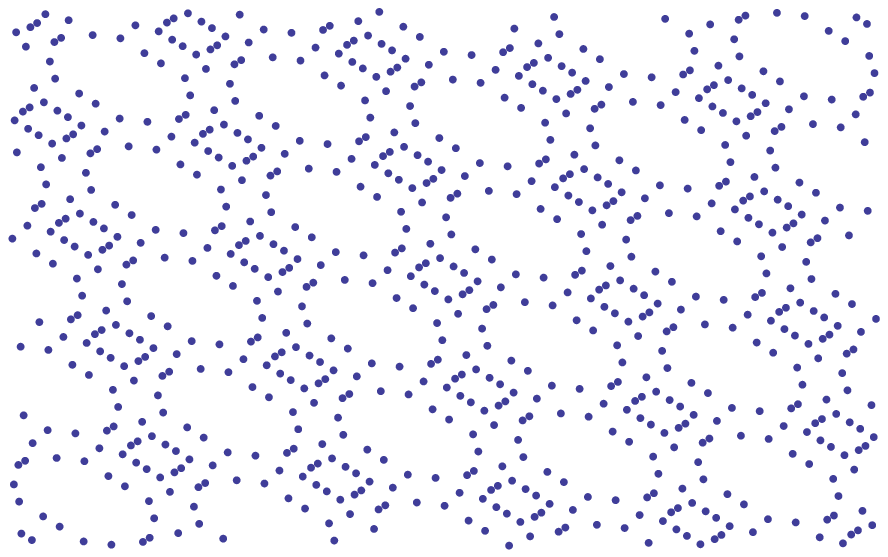}
\caption{\label{fig:modhyp1} (a) Left: $H_2(51; 2^{10})$.
(b) Right: $H_2(1325; 48^2)$.}
\end{figure}
\end{center}



In the sequel, coordinate sumsets will be the only type of sumset discussed.
Hence we may drop the premodifier ``coordinate'' without fear of confusion.
For $a$ relatively prime to $n$, we define the
sumset $S_2(a;n)$, the difference set $D_2(a;n)$, and their reduced
counterparts as
\bea
S_2(a;n) & \ = \ & \{x_1+x_2 : (x_1, x_2) \in H_2(a;n)\} \nonumber \\
D_2(a;n) & \ = \ & \{x_1- x_2: (x_1, x_2) \in H_2(a;n)\} \nonumber \\
\bar{S}_2(a;n) & \ = \ & \{x_1+x_2 \bmod n : (x_1, x_2) \in H_2(a;n)\} \nonumber \\
\bar{D}_2(a;n) & \ = \ & \{x_1-x_2 \bmod n : (x_1, x_2) \in H_2(a;n)\}.
\eea

From a geometric viewpoint,
$\#S_2(a;n)$ counts the number of lines of slope
$-1$ that intersect $H_2(a;n)$, and $\#D_2(a;n)$
counts the number of lines of slope 1 that intersect $H_2(a;n)$.
When the ratio
\be
c_2(a;n):=\# \bar{S}_2(a;n)/\#\bar{D}_2(a;n)
\ee
exceeds $1$, we have sum-dominance of $\bar{H}_2(a;n)$.

A $d$-dimensional modular hyperbola is of the form
\bea \label{highdimmodhyp}
 H_d(a;n) \ := \  \{(x_1, \dots, x_d): x_1 \cdots x_d \equiv
 a \bmod n, 1 \leq x_1,\dots, x_d <n\},
\eea
where $(a,n) = 1$.
We define the generalized signed sumset as
\bea
\bar{S}_d(m;a;n)=\{x_1 \ + \cdots + \ x_m - \cdots - x_d \bmod n:
(x_1, \dots, x_d) \in H_d(a;n) \},\ \ \
\eea
where $m$ is the number of plus signs in $\pm x_1\pm \cdots \pm x_d$.
In particular, $\bar{S}_2(1;a;n)=\bar{D}_2(a;n)$ and
$\bar{S}_2(2;a;n)=\bar{S}_2(a;n)$.


The goal of this paper is to extend results of \cite{EKSY} to
the general two-dimensional modular hyperbolas in \eqref{modhyp},
and to investigate the higher dimensional modular hyperbolas defined in
\eqref{highdimmodhyp}.
We prove explicit formulas for the cardinalities of the
sumsets $\bar{S}_2(a;n)$ and difference sets $\bar{D}_2(a;n)$
(Theorems \ref{thm:pis2indis2case} and \ref{p case}).  This
allows us to analyze the ratios $c_2(a;n)$
(Theorems \ref{3mod4} -- \ref{84}), thus providing conditions
on $a$ and $n$ for sum dominance and difference dominance of
reduced modular hyperbolas $\bar{H}_2(a;n)$.
For example, a special case of Theorem \ref{twoprimes}
shows that if $a=11$ and $n=3^t7^s$ with $t\ge 2$,
then $c_2(a;n)>1$, i.e., we have sum dominance.
A special case of
Theorem \ref{84} shows that when $a$ is a fixed power of 4,
we have sum dominance
for more than $85\%$ of those $n$ relatively prime to $a$.
For $d > 2$ and
positive integers $n$ whose prime factors all exceed 7, we prove in
Theorem \ref{higherdim} that $\#\bar{S}_d(m;a;n)=n$.  This means that
each such generalized sumset consists of all possible values mod $n$,
i.e., all possible sums and differences occur.


\section{Counting Preliminaries}

In this section we
present some counting results that are central to proving our main theorems. Many of these are natural generalizations of results from \cite{EKSY}, so we refer the reader to the appendix for detailed proofs.






Throughout this paper, $p$ always denotes a prime.
The following proposition reduces the analysis of the cardinalities of $\bar{S}_d(m;a;n)$ to those of  $\bar{S}_d(m;a;p^t)$, where $p^t$ is a factor in the canonical factorization of $n$.

\begin{prop}\label{mult}
Let $n=\prod_{i=1}^{k} p_i^{e_i}$ be the factorization of $n$ into distinct prime powers. Then
\bea
\#\bar{S}_d(m;a;n)\ = \ \prod_{i=1}^k \# \bar{S}_d(m;a;p_i^{e_i}).
\eea
\end{prop}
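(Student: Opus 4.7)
The plan is to prove Proposition \ref{mult} via the Chinese Remainder Theorem, exploiting the fact that both the multiplicative relation $x_1\cdots x_d\equiv a$ and the signed sum $\pm x_1\pm\cdots\pm x_d$ are ring-theoretic and therefore respect the CRT decomposition.

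First I would set up the CRT isomorphism. Let $\phi\colon \Z/n\Z\to\prod_{i=1}^{k}\Z/p_i^{e_i}\Z$ be the ring isomorphism $x\mapsto(x\bmod p_i^{e_i})_{i=1}^{k}$. Componentwise this yields a bijection $\Phi\colon(\Z/n\Z)^d\to\prod_{i=1}^{k}(\Z/p_i^{e_i}\Z)^d$. Since $(a,n)=1$, any tuple with product $\equiv a\bmod n$ has each coordinate a unit mod $n$, so working modulo $n$ is harmless for the reduced hyperbola $\bar{H}_d(a;n)$.

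Second, I would verify that $\Phi$ carries the hyperbola onto the product of hyperbolas:
\[
\Phi\bigl(\bar{H}_d(a;n)\bigr) \ = \ \prod_{i=1}^{k}\bar{H}_d(a;p_i^{e_i}).
\]
The forward inclusion is immediate because $x_1\cdots x_d\equiv a\bmod n$ implies the same congruence modulo each $p_i^{e_i}$. The reverse inclusion is exactly the surjectivity statement in CRT: given local data $(x_1^{(i)},\ldots,x_d^{(i)})$ with $\prod_j x_j^{(i)}\equiv a\bmod p_i^{e_i}$, reassemble each coordinate to get global $x_j\in\Z/n\Z$ with $\prod_j x_j\equiv a\bmod n$.

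Third, I would observe that the signed-sum map $\sigma\colon(x_1,\ldots,x_d)\mapsto\varepsilon_1 x_1+\cdots+\varepsilon_d x_d$ (with $\varepsilon_j\in\{\pm 1\}$ chosen so that $m$ of them are $+1$) is additive, hence commutes with reduction modulo $p_i^{e_i}$. Consequently,
\[
\phi\bigl(\bar{S}_d(m;a;n)\bigr) \ = \ \phi\bigl(\sigma(\bar{H}_d(a;n))\bigr) \ = \ \prod_{i=1}^{k}\sigma\bigl(\bar{H}_d(a;p_i^{e_i})\bigr) \ = \ \prod_{i=1}^{k}\bar{S}_d(m;a;p_i^{e_i}),
\]
where the middle equality uses both Step 2 and the compatibility of $\sigma$ with the product structure (the image of a product set under a product of maps is the product of images). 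Taking cardinalities and using that $\phi$ is a bijection gives the claimed formula. The only step requiring care is the middle equality above, where one must argue that \emph{every} tuple in the product side of $\bar{H}_d$ arises from some global tuple (again CRT surjectivity), so that no signed sums in the local factors are missed on the global side; this is the routine but essential point where CRT does the real work.
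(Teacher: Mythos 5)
Your proposal is correct and is essentially the paper's own argument: both rest on the Chinese Remainder Theorem, with injectivity/uniqueness giving one direction and CRT surjectivity (reassembling local points of $H_d(a;p_i^{e_i})$ into a global point of $H_d(a;n)$) giving the other. The only cosmetic difference is that you first decompose $\bar{H}_d(a;n)$ as a product and then push forward through the signed-sum map, whereas the paper defines the reduction map directly on $\bar{S}_d(m;a;n)$ and verifies bijectivity; the content is the same.
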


The proof is given in Appendix \ref{pf mult}.

Lemma \ref{SDrel} cuts our work in half, as once we understand the sumset we immediately have results for the corresponding difference set.


\begin{lem} \label{SDrel} We have $\bar{S}_2(a;n) \ = \ \bar{D}_2(-a;n)$.
\end{lem}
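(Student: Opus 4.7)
The plan is to exhibit an explicit involution between $H_2(a;n)$ and $H_2(-a;n)$ that sends the sum map to the difference map modulo $n$. The natural candidate is the map $\varphi:(x,y)\mapsto (x, n-y)$, since negating the second coordinate mod $n$ both flips the product's sign and turns $x+y$ into $x-(n-y)\equiv x+y\pmod n$.

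The key steps are as follows. First I would verify that $\varphi$ is a well-defined bijection from $H_2(a;n)$ to $H_2(-a;n)$: the reflection $y\mapsto n-y$ is a bijection of $\{1,2,\dots,n-1\}$, so the ranges of coordinates are preserved; and if $xy\equiv a\pmod n$, then
\[
x(n-y) \ = \ nx - xy \ \equiv \ -a \pmod n,
\]
so $\varphi(x,y)\in H_2(-a;n)$. Because $\varphi$ is clearly an involution when composed with itself on the appropriate side ($(x,y)\mapsto(x,n-y)\mapsto(x,y)$), it is a bijection.

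Next I would compare the sum and difference statistics along $\varphi$. For any $(x,y)\in H_2(a;n)$,
\[
x+y \ \equiv \ x-(n-y) \pmod n,
\]
and $(x,n-y)\in H_2(-a;n)$. This shows $\bar S_2(a;n)\subseteq \bar D_2(-a;n)$. Running the identical argument with $a$ replaced by $-a$ and applying $\varphi$ again gives the reverse inclusion. Hence $\bar S_2(a;n)=\bar D_2(-a;n)$.

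There is really no obstacle here; the only thing to be careful about is that $n-y$ lies in $\{1,\dots,n-1\}$ (which it does since $1\le y<n$), so that the definition of $H_2(-a;n)$ in \eqref{modhyp} is genuinely met and we are not accidentally introducing the forbidden value $y=0$. Once that is checked, the bijection and the congruence $x+y\equiv x-(n-y)\pmod n$ complete the proof.
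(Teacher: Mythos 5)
Your proof is correct and follows essentially the same route as the paper: both use the map $(x,y)\mapsto(x,n-y)$, which carries $H_2(a;n)$ to $H_2(-a;n)$ and converts the coordinate sum into the coordinate difference modulo $n$. Your observation that this map is a bijection (an involution up to the sign of $a$) packages the two inclusions slightly more cleanly, but the content is identical.
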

\begin{proof}
We show $\bar{S}_2(a;n) \subseteq \bar{D}_2(-a;n)$; the reverse containment is
handled similarly. Let $\tau \in \bar{S}_2(a;n)$.
Then there
exists $(x_0,y_0) \in H_2(a;n)$ such that $x_0y_0\equiv a \bmod n$ and
$ x_0+y_0 \equiv \tau \bmod n$.
Since $(x_0, n-y_0) \in H_2(-a; n)$ and $\tau \equiv x_0-(n-y_0) \bmod n$,
we see that $\tau \in  \bar{D}_2(-a;n)$.
\end{proof}



\begin{lem}\label{2k} We have $(2k \bmod p^t) \in \bar{D}_2(a;p^t) \Leftrightarrow (k^2+a)$ is a square modulo $p^t$. The map $f(k)=2k \bmod p^t$ defines a bijection \be f: \{k: k^2+a\ \text{\rm is a square} \bmod p^t, 0 \leq k < p^t\} \rightarrow \bar{D}_2(a;p^t) \ee when $p>2$. If $p=2$, then $f$ defines a bijection \be f: \{k: k^2+a\ \text{\rm is a square} \bmod 2^t, 0 \leq k < 2^{t-1}\} \rightarrow \bar{D}_2(a;2^t). \ee
%
%
\end{lem}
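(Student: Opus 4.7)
The plan is to turn membership in $\bar{D}_2(a;p^t)$ into the condition that a discriminant is a square. The algebraic identity $(x+y)^2 = (x-y)^2 + 4xy$ shows that if $(x,y) \in H_2(a;p^t)$ with $x-y \equiv 2k \bmod p^t$, then $(x+y)^2 \equiv 4(k^2+a) \bmod p^t$, so $k^2+a$ is a square modulo $p^t$ (this works uniformly for all $p$, including $p=2$, via the substitution below). Conversely, given $w^2 \equiv k^2+a \bmod p^t$, I would set $x \equiv w+k$ and $y \equiv w-k \bmod p^t$, choosing representatives in $\{1,\ldots,p^t-1\}$. Then $xy \equiv w^2-k^2 \equiv a \bmod p^t$ and $x-y \equiv 2k \bmod p^t$; the hypothesis $(a,p)=1$ forces $x,y$ to be units modulo $p^t$, hence nonzero, so the pair lies in $H_2(a;p^t)$. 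This yields the equivalence without ever needing to invert $4$.

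Once the equivalence is in hand, the bijection claims follow by a range count. When $p>2$, multiplication by $2$ is invertible modulo $p^t$, so $f(k)=2k \bmod p^t$ is already a bijection on $\{0,1,\ldots,p^t-1\}$; restricting to $\{k: k^2+a \text{ is a square } \bmod p^t\}$ and applying the equivalence identifies the image as $\bar{D}_2(a;p^t)$. When $p=2$, since $(a,2)=1$, every $(x,y) \in H_2(a;2^t)$ has both coordinates odd, so every element of $\bar{D}_2(a;2^t)$ is even. The map $k \mapsto 2k \bmod 2^t$ is a bijection from $\{0,1,\ldots,2^{t-1}-1\}$ onto the even residues modulo $2^t$, and combining with the equivalence pins down the image of the restricted $f$ as $\bar{D}_2(a;2^t)$.

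The only subtle point is the backward direction at $p=2$: one cannot divide by $4$ modulo $2^t$ without losing precision, and one cannot solve $x+y \equiv 2w$ for $w$ inside $\Z/2^t\Z$. The substitution $x=w+k$, $y=w-k$ sidesteps both pitfalls, since $xy=w^2-k^2$ is an identity in $\Z$, so reducing mod $p^t$ gives the required congruence for every $p$ at once. Thus the entire proof reduces to this substitution together with short bookkeeping about the range of $k$ in the two parity cases.
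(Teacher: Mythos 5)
Your argument is essentially the paper's argument: the substitution $x=w+k$, $y=w-k$ is exactly the construction used in the paper's proof, completing the square is the same step, and the bijection bookkeeping (invertibility of $2$ for odd $p$; evenness of all differences and the restriction of $k\mapsto 2k$ to $0\le k<2^{t-1}$ for $p=2$) matches as well. The one flaw is that you have located the subtlety in the wrong direction. The backward direction at $p=2$ is the unproblematic one: $xy=(w+k)(w-k)=w^2-k^2$ is an identity in $\Z$, so no precision is lost there. The precision loss is in the \emph{forward} direction: from $(x+y)^2\equiv 4(k^2+a)\bmod 2^t$ and $x+y=2w$ one only gets $w^2\equiv k^2+a\bmod 2^{t-2}$, so as written your forward implication is not established for $p=2$ (your parenthetical defers to ``the substitution below,'' but that substitution is only carried out for the converse). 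The repair is one line and uses the same idea: given $(x,y)\in H_2(a;2^t)$ with $x-y\equiv 2k\bmod 2^t$, set $w:=x-k\in\Z$; then $y\equiv w-k\bmod 2^t$, so $a\equiv xy\equiv w^2-k^2\bmod 2^t$, i.e., $k^2+a\equiv w^2\bmod 2^t$ exactly --- this is the paper's ``$k^2+a\equiv(x_0-k)^2$.'' (Alternatively, one can note that $x-y\equiv 2k\bmod 2^t$ and $xy\equiv a\bmod 2^t$ in fact yield $(x+y)^2\equiv 4(k^2+a)\bmod 2^{t+2}$, after which dividing by $4$ is legitimate.) With that correction the proof is complete and coincides with the paper's.
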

See Appendix \ref{pf 2k}. By Lemma 2.2, a similar result for $S$ in place of $D$ follows by replacing $a$ by $-a$.

\section{Cardinalities of $\bar{S}_2(a;p^t)$ and $\bar{D}_2(a;p^t)$}

In this section we compute the cardinalities of $\bar{S}_2(a;p^t)$ and $\bar{D}_2(a;p^t)$.
We then give conditions on $a$ and $n$  for sum dominance and
difference dominance of $\bar{H}_2(a;n)$.

\subsection{Case 1: $p = 2$}

We isolate a useful result that we need for the proof of the next lemma.

\begin{prop}[Gauss \cite{Ga}] \label{squaremod2}
For $t \ge 1$, any integer of the form $4^k(8n+1)$ is a square
$\bmod  2^t$.
\end{prop}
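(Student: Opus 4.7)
The plan is to reduce the claim about $4^k(8n+1)$ to the classical statement that every integer congruent to $1 \bmod 8$ is a square modulo every power of $2$, and then to absorb the factor $4^k = (2^k)^2$ into the square.

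First I would dispose of the degenerate range. If $2k \ge t$, then $4^k \equiv 0 \pmod{2^t}$, hence $4^k(8n+1) \equiv 0 \equiv 0^2 \pmod{2^t}$, and the proposition holds trivially. So from now on assume $2k < t$; it then suffices to show that $8n+1$ is a square modulo $2^{t-2k}$, for if $y^2 \equiv 8n+1 \pmod{2^{t-2k}}$ then multiplying by $4^k$ yields $(2^k y)^2 \equiv 4^k(8n+1) \pmod{2^t}$.

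The key step is the assertion that $8n+1$ is a square modulo $2^s$ for every $s \ge 1$, which I would prove by induction on $s$. The base cases $s=1,2,3$ are immediate since $8n+1 \equiv 1 = 1^2 \pmod{8}$. For the inductive step, suppose $s \ge 3$ and $x$ is an odd integer with $x^2 \equiv 8n+1 \pmod{2^s}$, so that $x^2 = 8n+1 + c\cdot 2^s$ for some integer $c$. I would try the lift $y = x + \epsilon \cdot 2^{s-1}$ with $\epsilon \in \{0,1\}$ and compute
\[
y^2 \ = \ x^2 + \epsilon x \cdot 2^s + \epsilon^2 \cdot 2^{2s-2}.
\]
Since $s \ge 3$ gives $2s-2 \ge s+1$, working modulo $2^{s+1}$ makes the last term vanish, leaving $y^2 \equiv 8n+1 + (c+\epsilon x)\cdot 2^s \pmod{2^{s+1}}$. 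Because $x$ is odd, choosing $\epsilon \equiv c \pmod 2$ forces $c+\epsilon x$ to be even, upgrading the congruence to modulus $2^{s+1}$ and completing the induction.

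The main obstacle is making the Hensel-style lifting step rigorous at the borderline $s=3$, where the quadratic term $\epsilon^2 \cdot 2^{2s-2}$ only barely vanishes modulo $2^{s+1}$; handling the base cases $s \le 3$ separately (directly from $8n+1 \equiv 1 \pmod 8$) cleanly avoids that issue. Once the lifting is established, combining it with the $2k \ge t$ case and the factorization $4^k = (2^k)^2$ yields the proposition.
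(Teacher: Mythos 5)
Your proof is correct. The paper itself offers no argument for this proposition --- it is stated as a classical fact and cited to Gauss's \emph{Disquisitiones Arithmeticae}, Art.\ 103 --- so there is no internal proof to compare against. Your two reductions are both sound: the trivial case $2k \ge t$, and the observation that a square root $y$ of $8n+1$ modulo $2^{t-2k}$ yields $(2^k y)^2 \equiv 4^k(8n+1) \pmod{2^t}$. The Hensel-style lift $y = x + \epsilon\, 2^{s-1}$ is the standard elementary argument: the cross term contributes $\epsilon x\, 2^s$, the quadratic term $\epsilon^2 2^{2s-2}$ vanishes modulo $2^{s+1}$ precisely because $s \ge 3$, and the parity choice $\epsilon \equiv c \pmod 2$ (using that $x$ is odd, and noting $y$ remains odd) completes the induction. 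This is exactly the content of part (2) of Proposition \ref{solve} quoted later in the appendix, so your argument in effect supplies a self-contained proof of the ingredient the paper takes on faith.
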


The next result is used in investigating some of the cases of Theorem \ref{thm:pis2indis2case}.

\begin{lem}\label{case3} Write $m=4b+r$ with $0 \le r \le 3$. For $t\geq 5$,
$k^2+3+8m$ is a square${}\bmod 2^t $ if and only if $ k\equiv \pm (4r+1) \bmod 16$.
\end{lem}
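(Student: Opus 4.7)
The plan is to reduce the condition ``$k^2+3+8m$ is a square mod $2^t$'' to an explicit congruence mod $32$, and then do a short $4$-case analysis in $r$. First I would analyze the 2-adic valuation of $N := k^2+3+8m$. If $k$ is even then $N \equiv 3 \pmod 4$, which is not a square mod $4$; hence squareness mod $2^t$ (with $t\ge 2$) forces $k$ to be odd. Writing $k=2j+1$, a quick expansion gives $k^2 = 4j(j+1)+1$, so $N = 4M$ where $M = j(j+1)+1+2m$ is odd (since $j(j+1)$ is even). Thus the $2$-adic valuation of $N$ is exactly $2$.

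Next I would use the following standard consequence of Proposition \ref{squaremod2} together with its converse: for $t\ge 5$, an integer of the form $4M$ with $M$ odd is a square mod $2^t$ if and only if $M\equiv 1\pmod 8$. The sufficient direction is exactly Gauss's result. For the necessary direction, if $4M \equiv n^2\pmod{2^t}$, matching $2$-adic valuations forces $n=2b$ with $b$ odd, and then $b^2\equiv M\pmod{2^{t-2}}$ forces $M\equiv 1\pmod 8$ since $t-2\ge 3$. Equivalently, the condition $M \equiv 1 \pmod 8$ is $N \equiv 4 \pmod{32}$, i.e.\ $k^2+3+8m \equiv 4\pmod{32}$, or
\be
k^2 \;\equiv\; 1-8m\pmod{32}.
\ee
Substituting $m=4b+r$ collapses $8m$ to $8r\pmod{32}$, reducing the problem to
\be
k^2 \;\equiv\; 1-8r\pmod{32}.
\ee

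Now I would verify the two directions of the equivalence. For $(\Leftarrow)$, assuming $k\equiv \pm(4r+1)\pmod{16}$, squaring yields $k^2\equiv (4r+1)^2\pmod{32}$ (the cross term $\pm 2\cdot 16j(4r+1)$ is divisible by $32$), and
\be
(4r+1)^2 \;=\; 16r^2+8r+1 \;\equiv\; 16r+8r+1 \;\equiv\; 1-8r\pmod{32},
\ee
using $16r^2\equiv 16r\pmod{32}$ since $r(r-1)$ is even. For $(\Rightarrow)$, I need that odd $k,k'$ with $k^2\equiv k'^2\pmod{32}$ satisfy $k\equiv \pm k'\pmod{16}$; equivalently, that the solutions of $x^2\equiv 1\pmod{32}$ are $x\in\{\pm 1,\pm 17\}\pmod{32}$, which differ from $\pm 1$ only by a multiple of $16$. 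Applying this to $k' = 4r+1$ finishes the argument, since $(4r+1)^2\equiv 1-8r\pmod{32}$ by the computation above.

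The proof is essentially a bookkeeping exercise once the equivalence ``square mod $2^t$ $\Longleftrightarrow$ $N\equiv 4\pmod{32}$'' is pinned down; the only mild obstacle is the converse direction of this equivalence, which uses the structure of squares mod $2^{t-2}$ rather than Proposition \ref{squaremod2} alone. The hypothesis $t\ge 5$ is sharp in the sense that it is exactly what is needed to force $b^2\equiv M\pmod{2^{t-2}}$ with $t-2\ge 3$, the smallest modulus at which $M \equiv 1 \pmod 8$ characterizes squares among odd residues.
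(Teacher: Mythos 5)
Your proof is correct and follows essentially the same route as the paper: reduce the squareness condition mod $32$ (where $8m$ collapses to $8r$) and invoke Gauss's Proposition \ref{squaremod2} for sufficiency. The only differences are cosmetic improvements — you treat all four residues $r$ uniformly via $k^2\equiv 1-8r\pmod{32}$ and you spell out the necessity direction of the ``square mod $2^t$ iff $\equiv 4\pmod{32}$'' characterization, which the paper's forward direction uses implicitly.
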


\begin{proof}
We only prove the case $r=0$, as the other proofs are similar.
First assume that $k^2+3+8m$ is a square${}\bmod 2^t$. Reducing${}\bmod 32$, we have that
$k^2+3+8m  \equiv k^2+3 \bmod 32$, which implies that $k\equiv \pm 1 \bmod 16$.
Conversely, assume that $k=16l \pm 1$ for some $l \in Z$.
Then $k^2+3+8m = (16l \pm 1)^2+3+8(4b)\equiv 256l^2 \pm 32l+4+32b
\equiv 4(8(8l^2 \pm l+b)+1) \bmod 2^t$.
Hence, by Proposition \ref{squaremod2}, $k^2+3+8m$ is a square${}\bmod 2^t$.
\end{proof}

\begin{thm}\label{thm:pis2indis2case}
For $t\geq 5$,
\bea
\threecase{\#\bar{D}_2(a;2^t) & \ = \ & }{ \frac{2^{t-4}}{3}+\frac{(-1)^{t-1}}{3}+3}{$a\equiv 7 \bmod 8$}{2^{t-3}}{$ a\equiv 1,5 \bmod \ 8$}{2^{t-4}}{$a\equiv 3 \bmod 8$} \nonumber \\
\threecase{\#\bar{S}_2(a;2^t) & \ = \ & }{ \frac{2^{t-4}}{3}+\frac{(-1)^{t-1}}{3}+3}{$a\equiv 1 \bmod 8$}{2^{t-3}}{$ a\equiv 3,7 \bmod \ 8$}{2^{t-4}}{$a\equiv 5 \bmod 8$.}
\eea
Moreover, $\#\bar{S}_2(a;16)=2\ $ for all $a$, and when $t \le 3$,
we have
$\#\bar{S}_2(a;2^t)=1$ with the exception that
$\#\bar{S}_2(a;8) = 2\ $ when $a \equiv 1 \bmod 4$.
\end{thm}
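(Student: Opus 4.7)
The plan is to reduce $\#\bar{S}_2$ to $\#\bar{D}_2$ via Lemma~\ref{SDrel}, and then reduce $\#\bar{D}_2$ to a quadratic-residue count via Lemma~\ref{2k}. Since $\#\bar{S}_2(a;2^t)=\#\bar{D}_2(-a;2^t)$ and negation permutes the odd residues mod $8$ as $1\leftrightarrow 7$ and $3\leftrightarrow 5$, the two displayed formulas are equivalent; so it suffices to prove the one for $\#\bar{D}_2(a;2^t)$. By Lemma~\ref{2k}, this equals the number of $k\in[0,2^{t-1})$ for which $k^2+a$ is a square mod $2^t$.

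I would then split on $a\bmod 8$. The case $a\equiv 3\bmod 8$ is essentially Lemma~\ref{case3}: the proof in the excerpt treats $r=0$, and the remaining $r\in\{1,2,3\}$ follow from the identical computation. In each subcase, valid $k$ form two residue classes mod $16$, giving $2\cdot 2^{t-1}/16=2^{t-4}$ solutions. For $a\equiv 1$ or $5\bmod 8$, odd $k$ fails immediately because $k^2+a\equiv 2$ or $6\bmod 8$ has odd $2$-adic valuation; for even $k=2j$, the odd number $4j^2+a$ is a square mod $2^t$ iff it is $\equiv 1\bmod 8$ (by Proposition~\ref{squaremod2}), which pins $j$ modulo $4$. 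A Hensel-style lifting check on the higher-order $2$-adic bits then yields exactly $2^{t-3}$ valid $k$.

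The main obstacle is $a\equiv 7\bmod 8$. Even $k$ is ruled out ($k^2+a\equiv 3$ or $7\bmod 8$), and among odd $k$ the subclass $k\equiv\pm 1\bmod 8$ gives $v_2(k^2+a)=3$, which is odd, hence not a square; only $k\equiv\pm 3\bmod 8$ survives. Writing $k=8m\pm 3$ and expanding, one obtains $k^2+a=16w$ where $w$ is a specific quadratic polynomial in $m$ with odd constant term, and $16w$ is a square mod $2^t$ if and only if $w$ is a square mod $2^{t-4}$. This self-similar descent produces a recursion for $g(t):=\#\bar{D}_2(a;2^t)$ whose unique solution matching the base case is $\tfrac{2^{t-4}}{3}+\tfrac{(-1)^{t-1}}{3}+3$; the oscillatory term traces back to the parity of $t$ controlling the lowest-level valuation constraint. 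The delicate step will be setting up and justifying this recursion (in particular accurately counting, at each stage, how many of the $2g(t)$ candidate lifts modulo $2^{t+1}$ actually satisfy the squaring condition, which alternates between two values depending on $t\bmod 2$).

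The residual small-$t$ assertions ($\#\bar{S}_2(a;2^t)$ for $t\le 4$) are a finite verification, since there are only boundedly many residues to enumerate; in each case one checks directly that the only sums arising are those in the stated formula.
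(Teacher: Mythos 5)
Your overall architecture matches the paper's: reduce $\#\bar{S}_2$ to $\#\bar{D}_2$ via Lemma \ref{SDrel} (using that $a\mapsto -a$ swaps $1\leftrightarrow 7$ and $3\leftrightarrow 5$ mod $8$), reduce $\#\bar{D}_2$ to counting $k\in[0,2^{t-1})$ with $k^2+a$ a square mod $2^t$ via Lemma \ref{2k}, and split on $a\bmod 8$. Your handling of $a\equiv 3\bmod 8$ (via Lemma \ref{case3}) and of $a\equiv 1,5\bmod 8$ (via Proposition \ref{squaremod2}) is essentially the paper's; a small slip there is that the condition $4j^2+a\equiv 1\bmod 8$ pins $j$ modulo $2$, i.e.\ $k$ modulo $4$, and no further Hensel-type check is needed once Gauss's criterion is met. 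The small-$t$ statements are indeed a finite verification, as in the paper.

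The genuine gap is in the one case that produces the formula $\frac{2^{t-4}}{3}+\frac{(-1)^{t-1}}{3}+3$, namely $a\equiv 7\bmod 8$ (equivalently the sumset for $a\equiv 1\bmod 8$). First, a concrete error: which of the two odd classes mod $8$ is killed by the valuation obstruction depends on $a\bmod 16$, not just $a\bmod 8$ — for $a\equiv 15\bmod 16$ it is $k\equiv\pm 3\bmod 8$ that has $v_2(k^2+a)=3$ and $k\equiv\pm 1$ that survives. This is symmetric and repairable, but it shows the "specific quadratic polynomial $w$" you descend to has not actually been computed. Second, and decisively, the whole content of this case is the recursion you defer: you assert a descent producing a recursion for $g(t)$ "whose unique solution matching the base case is" the stated formula, but you neither write the recursion down nor prove that the right number of the $2g(t)$ candidate lifts survive at each stage (it must come out to $2g(t)+(-1)^t-3$). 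That survival count is precisely the delicate point, because $w$ is a general quadratic in $s$, not of the self-similar shape $s^2+a'$, and its values keep acquiring further factors of $2$. The paper sidesteps this: after restricting to the surviving odd class it manipulates the condition (multiplying by $l^{-2}$, completing the square) until the count becomes $2\,\#\{k^2\bmod 2^{t-4}\}$, and then quotes Stangl's closed formula (Proposition \ref{Stangl}) for the number of squares modulo a power of $2$ — which is exactly where the $1/3$ and the alternating term come from. As written, your proposal does not prove this case; you would need either to carry out and verify your recursion (in effect reproving Stangl's result) or to make the reduction to the known square count as the paper does.
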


\begin{proof}
The claim for $t\le 4$ can be checked by direct calculation,
so assume $t\geq 5$.
By Lemma \ref{SDrel}, it is enough to prove the claims about the cardinality of $\bar{D}_2(a;2^t)$ when $a\equiv 1,3,5 \bmod 8$ and $\bar{S}_2(a;2^t)$ when $a\equiv 1 \bmod 8$.

We refer to the Appendix \ref{pf pis2indis2case} for the proofs of the results for $\bar{D}_2(a;2^t)$ when $a\equiv 1,5 \bmod 8$ and for $\bar{S}_2(a;2^t)$ when $a\equiv 1 \bmod 8$, since these proofs are straightforward
generalizations of proofs in \cite{EKSY}.


It remains to prove the result for the difference set when $a\equiv 3 \bmod 8$.
Write $a = 3 + 8m$.  We consider only the case where $m \equiv 0 \bmod 4$,
since the cases $m \equiv 1,2,3 \bmod 4$ are proved similarly and lead to
the same result.
By Lemma \ref{case3}, we see that if $m \equiv 0 \bmod 4$, then
\bea
& & \#\{k: k^2+3+8m \ \text{\rm is a square } \bmod 2^t, \ 0 \leq k < 2^{t-1}\}\nonumber\\ & &\ \ \ \ \  \ = \  \#\{1+16l: 0\leq l <2^{t-5}\} + \#\{15+16l: 0\leq l <2^{t-5}\} \ = \ 2^{t-4}.\ \ \ \ \
\eea
By Lemma \ref{2k}, we know
\be
\#\bar{D}(a;2^t) \ = \ \#\{k: k^2+3+8m\ \text{\rm is a square}
\bmod 2^t, \ 0 \leq k <2^{t-1}\}.
\ee
Hence $\#\bar{D}_2(a;2^t) \ = \ 2^{t-4}$.
\end{proof}


\subsection{Case 2: $p > 2$}

For this subsection, we adopt the following notation from \cite{EKSY}:
\bea
S_2'(a;p^t) & \ = \ & \{k \bmod p^t: k^2-a\text{  is a square} \bmod p^t, \ p \nmid (k^2-a)\} \nonumber \\
S_2''(a;p^t) & \ = \ & \{k \bmod p^t: k^2-a\text{ is a square} \bmod p^t, \ p | (k^2-a)\} 
\eea

By Lemma \ref{2k},
\bea
\#\bar{S}_2(a;p^t) & \ = \ & \#S_2'(a;p^t)+\#S_2''(a;p^t)
\eea

\begin{lem}\label{S_2'}
Let $p$ be an odd prime. Then
\bea
\twocase{\#\bar{S}_2'(a;p^t) \ = \ }{\frac{(p-1)p^{t-1}}{2}}{$\left(\frac{a}{p}\right)=-1$}{\frac{(p-3)p^{t-1}}{2}}{$\left(\frac{a}{p}\right)=1$.}
\eea
\end{lem}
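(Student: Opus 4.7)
The plan is to reduce the count modulo $p^t$ to a count modulo $p$, then evaluate the latter via a standard Legendre-symbol sum. The condition $p\nmid(k^2-a)$ is precisely the nondegeneracy condition that lets Hensel's lemma apply to the polynomial $Y^2-(k^2-a)$. So first I would observe that for $k$ with $p\nmid(k^2-a)$, the quantity $k^2-a$ is a square modulo $p^t$ if and only if it is a (nonzero) square modulo $p$; thus whether $k\bmod p^t$ lies in $S_2'(a;p^t)$ depends only on $k\bmod p$. Consequently
\[
\#\bar S_2'(a;p^t) \;=\; p^{t-1}\cdot N, \qquad
N:=\#\bigl\{k\bmod p:\bigl(\tfrac{k^2-a}{p}\bigr)=1\bigr\}.
\]

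Next I would express $N$ through Legendre symbols. Writing $\chi(x)=\bigl(\tfrac{x}{p}\bigr)$, the indicator of ``$\chi(x)=1$'' equals $\tfrac12(1+\chi(x))$ except when $\chi(x)=0$, in which case the formula overcounts by $\tfrac12$. Hence
\[
N \;=\; \sum_{k=0}^{p-1}\frac{1+\chi(k^2-a)}{2} \;-\; \tfrac12\,\#\{k\bmod p : k^2\equiv a\bmod p\}.
\]
The second term equals $\tfrac12(1+\chi(a))$, since $x^2\equiv a\bmod p$ has $1+\chi(a)$ solutions for $a\not\equiv 0$. For the first term I would invoke the standard quadratic character sum: for any $a$ with $p\nmid a$, the polynomial $k^2-a$ has nonzero discriminant, so
\[
\sum_{k=0}^{p-1}\chi(k^2-a) \;=\; -1.
\]
(If one prefers, when $\chi(a)=1$ factor $k^2-a=(k-b)(k+b)$ and use $\sum_k\chi((k-b)(k+b))=-1$; when $\chi(a)=-1$ the polynomial is irreducible and the same value $-1$ holds.)

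Substituting gives $N=\tfrac{p-1}{2}-\tfrac{1+\chi(a)}{2}$, i.e.\ $N=\tfrac{p-1}{2}$ when $\chi(a)=-1$ and $N=\tfrac{p-3}{2}$ when $\chi(a)=1$. Multiplying by $p^{t-1}$ yields the two cases of the lemma. The only nontrivial input is the character-sum evaluation $\sum_k\chi(k^2-a)=-1$; once that is in hand, the rest is bookkeeping. The Hensel-lifting step is where one must be careful, but it is clean precisely because the defining condition of $S_2'$ excludes $p\mid(k^2-a)$, avoiding the ramified case that the complementary set $S_2''$ is designed to handle.
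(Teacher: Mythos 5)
Your proof is correct and follows essentially the same route as the paper: both reduce the count modulo $p^t$ to a count modulo $p$ via Hensel lifting (the paper's Proposition on lifting solutions of $x^2\equiv a\bmod p^t$), encode membership with the indicator $\tfrac12\bigl(1+\bigl(\tfrac{k^2-a}{p}\bigr)\bigr)$, and evaluate $\sum_{k}\bigl(\tfrac{k^2-a}{p}\bigr)=-1$. The only cosmetic difference is that you subtract the overcount from the $p\mid(k^2-a)$ terms at the end, while the paper excludes those $k$ from the sum at the outset.
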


See Appendix \ref{pf S_2'} for the proof.

\begin{lem}\label{S''_2} Let $p$ be an odd prime. If $\left(\frac{a}{p}\right)=-1$, then $\#S''_2(a;p^t)=0$ and thus $\#S_2(a;p^t)=\frac{\phi(p^t)}{2}$. If $\left( \frac{a}{p} \right) =1$, \bea
\#S''_2(a;p^t) \ = \ \frac{p^{t-1}}{p+1}+
\frac{3}{2}+\frac{(-1)^{t-1}(p-1)}{2(p+1)}.
\eea
\end{lem}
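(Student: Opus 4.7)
The plan is to split into two cases by the Legendre symbol $\left(\frac{a}{p}\right)$ and combine with the disjoint decomposition $\bar{S}_2(a;p^t) = S'_2(a;p^t) \sqcup S''_2(a;p^t)$ (already stated in the paper) together with Lemma \ref{S_2'}. When $\left(\frac{a}{p}\right) = -1$, any $k \in S''_2(a;p^t)$ would satisfy $k^2 \equiv a \bmod p$, making $a$ a QR modulo $p$ and contradicting the hypothesis (note $p \nmid a$ since $(a,p)=1$). Hence $S''_2(a;p^t) = \emptyset$, and Lemma \ref{S_2'} gives $\#\bar{S}_2(a;p^t) = (p-1)p^{t-1}/2 = \phi(p^t)/2$, establishing the first assertion.

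When $\left(\frac{a}{p}\right) = 1$, the constraint $p \mid (k^2 - a)$ restricts $k$ to $k \equiv \pm \alpha_1 \bmod p$ for a fixed square root $\alpha_1$ of $a$ mod $p$. The two residue classes are disjoint ($p$ is odd and $p \nmid \alpha_1$), and the involution $k \mapsto -k$ on $\Z/p^t\Z$ shows they contribute equally to $S''_2$, so I would count only $k \equiv \alpha_1 \bmod p$ and double. Hensel's lemma lifts $\alpha_1$ uniquely to $\alpha \bmod p^t$ with $\alpha^2 \equiv a \bmod p^t$; parametrize $k \equiv \alpha + pm \bmod p^t$ with $m \in \{0, 1, \dots, p^{t-1} - 1\}$. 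The key identity
\[
k^2 - a \ \equiv \ pm(2\alpha + pm) \pmod{p^t}
\]
then shows that for $m \ne 0$ with $v_p(m) = j$ (necessarily $0 \le j \le t-2$) and $m = p^j n$ with $p \nmid n$, the valuation $v_p(k^2 - a) \bmod p^t$ is exactly $j+1$ (since $p \nmid 2\alpha$), with unit part $\equiv 2\alpha n \bmod p$.

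By the standard criterion that $p^e u$ (with $p \nmid u$ and $e < t$) is a square modulo $p^t$ iff $e$ is even and $u$ is a QR modulo $p$, we need $j$ odd and $n$ to lie in the QR class matching $(2\alpha/p)$, which selects $(p-1)/2$ residues of $n$ modulo $p$. For each admissible odd $j$ this yields $\tfrac{p-1}{2}\, p^{t-2-j}$ valid $m$'s; adding the contribution from $m = 0$ (where $k^2 - a \equiv 0$ is trivially a square) and doubling, I obtain
\[
\#S''_2(a;p^t) \ = \ 2 + (p-1) \sum_{\substack{j \text{ odd} \\ 1 \le j \le t - 2}} p^{t-2-j}.
\]
The geometric sum evaluates to $(p^{t-1} - 1)/(p^2 - 1)$ when $t$ is odd and $p(p^{t-2} - 1)/(p^2 - 1)$ when $t$ is even. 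The main obstacle I anticipate is purely the bookkeeping needed to fold these two parity cases into the single closed form $\tfrac{p^{t-1}}{p+1} + \tfrac{3}{2} + \tfrac{(-1)^{t-1}(p-1)}{2(p+1)}$ stated in the lemma; placing each parity over the common denominator $2(p+1)$ and matching with the claimed expression completes the proof.
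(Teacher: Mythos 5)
Your argument is correct, and it reaches the lemma by a genuinely different route from the paper. The paper's proof (Appendix \ref{pf S''_2}) notes that $k \in S''_2(a;p^t)$ forces $k^2 - a \equiv p^2m^2 \bmod p^t$, and that $k \mapsto m^2 \bmod p^{t-2}$ is a two-to-one surjection onto the set of squares modulo $p^{t-2}$ (both surjectivity and the fiber size coming from Proposition \ref{solve}); the formula then drops out of Stangl's count of squares (Proposition \ref{Stangl}), since doubling $\#\{m^2 \bmod p^{t-2}\}$ gives exactly the displayed expression. You instead compute the count from first principles: Hensel-lift a square root $\alpha$ of $a$ to $\bmod\ p^t$, stratify $k = \alpha + pm$ by $j = v_p(m)$, observe $k^2 - a \equiv p^{j+1}n(2\alpha + pm) \bmod p^t$ with unit part $2\alpha n \bmod p$, and apply the valuation criterion for squares. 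I verified that your tally $2 + (p-1)\sum_{j\ \mathrm{odd},\, 1 \le j \le t-2} p^{t-2-j}$ agrees with $\tfrac{p^{t-1}}{p+1} + \tfrac{3}{2} + \tfrac{(-1)^{t-1}(p-1)}{2(p+1)}$ for both parities of $t$ (e.g., $t=3$ gives $p+1$ either way, $t=4$ gives $(p^3+p+2)/(p+1) = p^2-p+2$), and your formula correctly degenerates to $2$ for $t \le 2$, matching the paper's Case 1. What the paper's route buys is brevity, since the geometric-series bookkeeping is outsourced to Stangl's theorem, which is needed elsewhere anyway; what yours buys is a self-contained proof and a transparent explanation of why the answer oscillates with the parity of $t$ (namely, which odd valuations $j \le t-2$ are available). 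The one step you should make explicit is the ``standard criterion'' that for odd $p$ and $e < t$, a class $p^e u$ with $p \nmid u$ is a square $\bmod\ p^t$ iff $e$ is even and $u$ is a quadratic residue $\bmod\ p$; this follows from Proposition \ref{solve} by writing a purported square root as $p^{e/2}z$ with $p \nmid z$, but it is not stated in the paper and deserves a line of justification.
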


See Appendix \ref{pf S''_2} for the proof.

\begin{thm}\label{p case}
For $t\geq 1$ and $p > 2$,
\bea
\twocase{\#\bar{S}_2(a,p^t) & \ = \ & }{\frac{(p-3)p^{t-1}}{2}+\frac{p^{t-1}}{p+1}+\frac{3}{2}+\frac{(-1)^{t-1}(p-1)}{2(p+1)}}{$\left(\frac{a}{p}\right)=1$}{\frac{\phi(p^t)}{2}}{$\left(\frac{a}{p}\right)=-1$} \nonumber \\
\fourcase{\#\bar{D}_2(a,p^t) & \ = \ & }{\frac{(p-3)p^{t-1}}{2}+\frac{p^{t-1}}{p+1}+\frac{3}{2}+\frac{(-1)^{t-1}(p-1)}{2(p+1)}}{$p\equiv 1 \bmod 4, \left(\frac{a}{p}\right)=1$}{\frac{\phi(p^t)}{2}}{$p\equiv 1 \bmod 4,
\left(\frac{a}{p}\right)=-1$}{\frac{(p-3)p^{t-1}}{2}+\frac{p^{t-1}}{p+1}+\frac{3}{2}+\frac{(-1)^{t-1}(p-1)}{2(p+1)}}{$p\equiv 3 \bmod 4, \left(\frac{a}{p}\right)=-1$}{\frac{\phi(p^t)}{2}}{$p\equiv 3 \bmod 4, \left(\frac{a}{p}\right)=1$.} \nonumber\\
\eea
\end{thm}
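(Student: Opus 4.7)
The plan is to assemble Theorem \ref{p case} from the preparatory results of the preceding two sections, handling the sumset first and then deducing the difference set via the reflection $\bar{S}_2(a;n) = \bar{D}_2(-a;n)$ provided by Lemma \ref{SDrel}.

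First I would invoke Lemma \ref{2k} with $a$ replaced by $-a$, which (as the remark following that lemma points out) shows that $k \mapsto 2k \bmod p^t$ is a bijection from $\{k : 0 \le k < p^t,\ k^2 - a \text{ is a square} \bmod p^t\}$ onto $\bar{S}_2(a;p^t)$. Partitioning the preimage according to whether $p \mid (k^2 - a)$ gives the decomposition
\begin{equation}
\#\bar{S}_2(a;p^t) \ = \ \#S_2'(a;p^t) + \#S_2''(a;p^t),
\end{equation}
whose two summands are supplied by Lemmas \ref{S_2'} and \ref{S''_2}. When $\left(\frac{a}{p}\right) = -1$, Lemma \ref{S''_2} gives $\#S_2''(a;p^t) = 0$ and the two lemmas combine to $\phi(p^t)/2$; when $\left(\frac{a}{p}\right) = 1$, adding the two closed forms yields precisely the claimed expression.

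For $\#\bar{D}_2(a;p^t)$, Lemma \ref{SDrel} gives $\bar{D}_2(a;p^t) = \bar{S}_2(-a;p^t)$, so the difference-set count is obtained by reading off the sumset count at $-a$. Since $\left(\frac{-a}{p}\right) = \left(\frac{-1}{p}\right)\left(\frac{a}{p}\right)$ and the supplementary law gives $\left(\frac{-1}{p}\right) = (-1)^{(p-1)/2}$, the Legendre symbol of $-a$ equals that of $a$ when $p \equiv 1 \bmod 4$ and flips sign when $p \equiv 3 \bmod 4$. Matching each of the four resulting sub-cases to the sumset dichotomy produces the four-branch formula stated in the theorem.

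All the substantive counting has been absorbed into the lemmas of Section 3, so no genuine obstacle remains; the only step requiring real care is the sign bookkeeping in the final $\bar{D}_2$ case analysis. As a sanity check I would verify the boundary case $t = 1$, where $p^{t-1} = 1$ and the alternating term $(-1)^{t-1}(p-1)/(2(p+1))$ combines with the other pieces to recover the familiar count $(p+1)/2$ when $\left(\frac{a}{p}\right) = 1$ and $(p-1)/2$ otherwise, matching the number of admissible sums directly computable from $H_2(a;p)$.
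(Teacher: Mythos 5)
Your proposal is correct and follows exactly the paper's route: the paper's own proof is the one-line observation that the theorem follows from Lemmas \ref{S_2'}, \ref{S''_2}, and \ref{SDrel}, with the decomposition $\#\bar{S}_2(a;p^t)=\#S_2'(a;p^t)+\#S_2''(a;p^t)$ via Lemma \ref{2k} already recorded just before those lemmas. Your extra care with the sign of $\left(\frac{-1}{p}\right)$ and the $t=1$ sanity check are exactly the bookkeeping the paper leaves implicit.
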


\begin{proof}
The result follows from Lemmas \ref{S_2'}, \ref{S''_2}, and \ref{SDrel}.
\end{proof}

\begin{cor} \label{1mod4}
For $p\equiv 1\bmod 4$, $c_2(a;p^k)=1.$
\end{cor}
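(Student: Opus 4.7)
The plan is to read off the result directly from Theorem \ref{p case} combined with the symmetry given by Lemma \ref{SDrel}. The key fact that makes $p \equiv 1 \bmod 4$ special is that $-1$ is a quadratic residue modulo $p$ in this case, so the Legendre symbol satisfies $\left(\frac{-a}{p}\right) = \left(\frac{a}{p}\right)$.

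First I would use Lemma \ref{SDrel}, which gives $\bar{D}_2(a;p^k) = \bar{S}_2(-a;p^k)$, so $\#\bar{D}_2(a;p^k) = \#\bar{S}_2(-a;p^k)$. Next I would invoke Theorem \ref{p case}: the cardinality of $\#\bar{S}_2(\cdot\,;p^k)$ depends on its argument only through the value of the Legendre symbol $\left(\frac{\cdot}{p}\right)$. Since $p \equiv 1 \bmod 4$ implies $\left(\frac{-1}{p}\right) = 1$, we get $\left(\frac{-a}{p}\right) = \left(\frac{a}{p}\right)$, and hence the formula returns the same value whether the input is $a$ or $-a$. Therefore $\#\bar{S}_2(a;p^k) = \#\bar{S}_2(-a;p^k) = \#\bar{D}_2(a;p^k)$, yielding $c_2(a;p^k) = 1$.

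Alternatively, one can simply compare the two cases of Theorem \ref{p case} for $p \equiv 1 \bmod 4$ side by side: both the $\left(\frac{a}{p}\right) = 1$ branch and the $\left(\frac{a}{p}\right) = -1$ branch produce identical formulas for $\#\bar{S}_2(a;p^k)$ and $\#\bar{D}_2(a;p^k)$, so their ratio is $1$ in either case. There is essentially no obstacle here — the content is entirely contained in the fact that the conditions distinguishing the two subcases of the $\bar{S}$ and $\bar{D}$ formulas coincide precisely when $p \equiv 1 \bmod 4$; the corollary is just a consequence of bookkeeping in Theorem \ref{p case}.
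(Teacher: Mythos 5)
Your proof is correct and matches the paper's (implicit) argument: the corollary is stated without proof precisely because, as you observe, the two branches of Theorem \ref{p case} for $p\equiv 1\bmod 4$ give identical formulas for $\#\bar{S}_2(a;p^k)$ and $\#\bar{D}_2(a;p^k)$. Your first paragraph correctly identifies the underlying reason, namely that $\left(\frac{-1}{p}\right)=1$ makes Lemma \ref{SDrel} close the loop.
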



\subsection{Ratios for $d=2$}
Now that we have explicit formulas for the cardinalities of the
sum and difference sets, the next natural object to study is the ratio
$c_2(a;n)$ of the size of the sumset to the size of the difference set.
By Corollary \ref{1mod4}, we only need to consider the prime factors of $n$
which are congruent to $3 \bmod 4$, since the primes which are congruent
to $1 \bmod 4$ do not change $c_2(a;n)$.
When $p \equiv 3 \bmod 4$, it is sufficient to evaluate
$c_2(a;p^t)$ in the case when
$\left(\frac{a}{p} \right)=1$, since $c_2(-a;p^t)$ is the
reciprocal of $c_2(a;p^t)$.


%



%

\begin{thm}\label{3mod4}
For $p\equiv 3 \bmod 4$ and $\left(\frac{a}{p} \right)=1$,
\be
c_2(a;p^t)\ = \ 1 - 2\sum_{i=0}^{[t/2]-1} \frac{1}{p^{2i+1}} \ +
\frac{2}{\phi(p^t)}.
\ee
\end{thm}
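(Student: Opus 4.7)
\medskip

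\noindent\textbf{Proof proposal.}
The plan is to prove the identity by direct algebraic manipulation: substitute the cardinalities from Theorem \ref{p case} into the ratio and show both sides agree as rational functions of $p$ and $p^t$. Under the hypotheses $p\equiv 3\bmod 4$ and $\left(\tfrac{a}{p}\right)=1$, Theorem \ref{p case} puts us in the fourth case for $\#\bar D_2$, giving the denominator $\tfrac{\phi(p^t)}{2}=\tfrac{(p-1)p^{t-1}}{2}$, while the numerator is the explicit expression
\[
\#\bar S_2(a;p^t)\ =\ \frac{(p-3)p^{t-1}}{2}+\frac{p^{t-1}}{p+1}+\frac{3}{2}+\frac{(-1)^{t-1}(p-1)}{2(p+1)}.
\]

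First I would divide each of the four summands by $\tfrac{(p-1)p^{t-1}}{2}$, obtaining
\[
c_2(a;p^t)\ =\ \frac{p-3}{p-1}\ +\ \frac{2}{(p-1)(p+1)}\ +\ \frac{3}{(p-1)p^{t-1}}\ +\ \frac{(-1)^{t-1}}{(p+1)p^{t-1}}.
\]
Next I would peel off $\tfrac{2}{\phi(p^t)}=\tfrac{2}{(p-1)p^{t-1}}$ from the third summand (leaving $\tfrac{1}{(p-1)p^{t-1}}$) and combine it with the fourth summand over the common denominator $(p^2-1)p^{t-1}$:
\[
\frac{1}{(p-1)p^{t-1}}+\frac{(-1)^{t-1}}{(p+1)p^{t-1}}\ =\ \frac{(p+1)+(-1)^{t-1}(p-1)}{(p^2-1)p^{t-1}}\ =\ \frac{2\,p^{\,t \bmod 2}}{(p^2-1)p^{t-1}}\ =\ \frac{2}{(p^2-1)p^{2[t/2]-1}},
\]
the parity split being $2p$ for $t$ odd and $2$ for $t$ even. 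This uniformization is the one technical step that requires a little care, but it is not a real obstacle.

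Combining the remaining constant-in-$t$ terms gives $\tfrac{p-3}{p-1}+\tfrac{2}{p^2-1}=\tfrac{p^2-2p-1}{p^2-1}$, so
\[
c_2(a;p^t)-\frac{2}{\phi(p^t)}\ =\ \frac{p^2-2p-1}{p^2-1}\ +\ \frac{2}{(p^2-1)p^{2[t/2]-1}}\ =\ \frac{p^2-2p-1+2p^{1-2[t/2]}}{p^2-1}.
\]
Finally I would evaluate the geometric sum on the right-hand side of the theorem:
\[
2\sum_{i=0}^{[t/2]-1}\frac{1}{p^{2i+1}}\ =\ \frac{2}{p}\cdot\frac{1-p^{-2[t/2]}}{1-p^{-2}}\ =\ \frac{2p-2p^{1-2[t/2]}}{p^2-1},
\]
so $1-2\sum_{i=0}^{[t/2]-1}p^{-(2i+1)}=\tfrac{p^2-2p-1+2p^{1-2[t/2]}}{p^2-1}$, matching the previous display. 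Adding back $\tfrac{2}{\phi(p^t)}$ completes the proof. The entire argument is mechanical once Theorem \ref{p case} is in hand; the only mildly subtle point is recognizing that the two parity subcases in $(-1)^{t-1}$ can be absorbed uniformly into a single $p^{2[t/2]-1}$ in the denominator, which is exactly what matches the upper limit of the geometric sum.
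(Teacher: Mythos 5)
Your proposal is correct and follows essentially the same route as the paper: substitute the formulas from Theorem \ref{p case}, reduce to a rational function of $p$ and $p^{t}$, and match it against the geometric sum (the paper writes the parity-dependent term as the tail of an infinite geometric series rather than evaluating the finite sum directly, but this is only a bookkeeping difference). All of your intermediate identities, including the uniformization of the two parity cases into $p^{2[t/2]-1}$, check out.
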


\begin{proof}
By Theorem \ref{p case},
\bea
c_2(a;p^t) & \ = \ & \left( \frac{(p-3)p^{t-1}}{2}+\frac{p^{t-1}}{p+1}+
\frac{3}{2}+\frac{(-1)^{t-1}(p-1)}{2(p+1)} \right) \frac{1}{\phi(p^t)/2}
\nonumber \\
& = & \frac{p^2 -2p-1}{p^2-1} +
\frac{(-1)^{t-1}(p-1)+3p+3}{(p+1)\phi(p^t)}.
\eea
Therefore,
\bea
c_2(a;p^t) -1 - \frac{2}{\phi(p^t)} & \ = \ &
\frac{-2p}{p^2-1} + \frac{(-1)^{t-1}(p-1)+p+1}{(p+1)\phi(p^t)}
\nonumber \\
& = & \frac{-2p}{p^2-1} + 2\sum_{[t/2]}^{\infty} \frac{1}{p^{2i+1}}
= - 2\sum_{i=0}^{[t/2]-1} \frac{1}{p^{2i+1}}.
\eea
\end{proof}

\begin{thm} \label{twoprimes}
Let $p < q$ be primes, both congruent to $3 \bmod 4$, and let $s, t \ge 2$.
If $a$ is a square $\bmod \ p$, then $c_2(a;p^t q^s) < 1$, so we have
difference dominance.
If $a$ is not a square $\bmod \ p$, then $c_2(a;p^t q^s) > 1$, so we have
sum dominance.
\end{thm}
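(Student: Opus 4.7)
The plan is to use the multiplicativity of $c_2$ from Proposition \ref{mult}, which gives $c_2(a;p^t q^s) = c_2(a;p^t)\cdot c_2(a;q^s)$, and then analyze each factor via the explicit formula in Theorem \ref{3mod4}. Two facts drive the case analysis. First, Lemma \ref{SDrel} yields the symmetry $c_2(-a;n) = 1/c_2(a;n)$. Second, because $p,q \equiv 3 \bmod 4$, the value $-1$ is a nonresidue modulo both primes, so $a$ and $-a$ have opposite quadratic characters mod each; combined with Hensel lifting, the character of $a$ mod $p$ (resp.\ mod $q$) completely determines the shape of $c_2(a;p^t)$ (resp.\ $c_2(a;q^s)$).

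Next I introduce the function
\begin{equation}
f(p,t) \ := \ 1 - 2\sum_{i=0}^{\lfloor t/2\rfloor -1} p^{-(2i+1)} + \frac{2}{\phi(p^t)},
\end{equation}
which by Theorem \ref{3mod4} equals $c_2(a;p^t)$ when $a$ is a square mod $p$, and equals $1/c_2(a;p^t)$ when it is not. A short telescoping calculation (comparing $f(p,t+1)-f(p,t)$ by parity of $t$) shows that $f(p,t)$ is strictly decreasing in $t$ for $t \ge 2$; hence on $t \ge 2$ it lies between $f(p,2) = 1 - \frac{2(p-2)}{p(p-1)}$ (its maximum) and $L(p) := \lim_{t \to \infty} f(p,t) = 1 - \frac{2p}{p^2-1}$ (its infimum, never attained).

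The heart of the argument is the key inequality
\begin{equation}
f(p,2) \ < \ L(q) \qquad \text{for primes } p < q \text{ both congruent to } 3 \bmod 4.
\end{equation}
Clearing denominators reduces this to $(p-2)(q^2-1) > pq(p-1)$, which I would establish by noting that consecutive primes $\equiv 3 \bmod 4$ satisfy $q \ge p+4$; substituting $q = p+4$ collapses the difference of the two sides to $3(p^2+p-10) > 0$, which holds for all $p \ge 3$, and the difference, viewed as a quadratic in $q$ with positive leading coefficient $p-2$, is increasing on $q > p$. I expect this polynomial inequality to be the main obstacle; everything else is bookkeeping.

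With the key inequality in hand, the theorem follows by a four-way case split on the quadratic characters of $a$ modulo $p$ and modulo $q$. If $a$ is a square mod both, then $c_2(a;p^t q^s) = f(p,t)\, f(q,s) < 1$. If $a$ is a square mod $p$ but not mod $q$, then $c_2(a;p^t q^s) = f(p,t)/f(q,s) \le f(p,2)/L(q) < 1$, invoking the key inequality together with $f(p,t) \le f(p,2)$ and $f(q,s) > L(q)$. The remaining two cases, where $a$ is a nonsquare mod $p$, follow by reciprocation from Lemma \ref{SDrel} and give $c_2(a;p^t q^s) > 1$, matching the theorem's sum-dominance claim.
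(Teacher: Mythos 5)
Your proof is correct and follows essentially the same route as the paper: reduce to the case where $a$ is a square $\bmod\ p$ via the reciprocal symmetry $c_2(-a;n)=1/c_2(a;n)$, use multiplicativity, and in the mixed-character case compare the maximum $f(p,2)$ of the decreasing sequence $c_2(a;p^t)$ against the infimum $L(q)=\lim_{s\to\infty}c_2(-a;q^s)$, which reduces to the same inequality $(p-2)/(p^2-p)>q/(q^2-1)$ checked at $q=p+4$. The only difference is cosmetic: you make the notation $f(p,t)$, $L(p)$ explicit and carry out the final polynomial verification $3(p^2+p-10)>0$ that the paper dismisses as ``easily accomplished.''
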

\begin{proof}
\noindent  It suffices to prove the first assertion,
for then the second will follow by taking
the reciprocal.  By Theorem \ref{3mod4}, $c_2(a;p^t) < 1$.
If $a$ is a square $\bmod \ q$, then also $c_2(a;q^s) < 1$, so that
$c_2(a;p^t q^s) = c_2(a;p^t)c_2(a;q^s) < 1$, as desired.
Finally, assume that $a$ is not a square $\bmod \ q$.  Then it remains to
show that $c_2(-a;q^s) > c_2(a;p^t)$.  By Theorem \ref{3mod4}, $c_2(a;p^t)$
is monotone decreasing in $t$.  Therefore it suffices to show that
$\lim_{s \to \infty} c_2(-a;q^s) > c_2(a;p^2)$.  This inequality is equivalent
to $1 -2q/(q^2-1) > 1 - (2p-4)/(p^2-p)$, so we must show that
$(p-2)/(p^2-p) > q/(q^2-1)$.  Since the right member is a decreasing
function of $q$, it suffices to prove this inequality when $q=p+4$, and this
is easily accomplished.
\end{proof}

It is not hard to show that the conclusion of Theorem \ref{twoprimes}
still holds in the case $s=1$, $t \ge 2$.  However, the inequalities
are reversed in the case $t=1$, $s \ge 1$.

The next three theorems are straightforward
generalizations of results from \cite{EKSY}, so we omit the proofs.

\begin{thm}\label{maxmin}
Let $N_k=\prod_{i=1}^k p_i$, where $p_i$ is the
$i$\textsuperscript{${\rm th}$} prime that is congruent to $3$ modulo $4$.
Fix a perfect square $a$ relatively prime to all of the $p_i$.  Then
\be
c_2(a;N_k)\ \asymp\ \log\log N_k,
\ee
and for any $t\geq 2$,
\be c_2(a;N_k^t)\ \asymp\ (\log\log N_k)^{-1}.
\ee
\end{thm}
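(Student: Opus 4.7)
The plan is to combine the multiplicativity from Proposition \ref{mult} with the closed form in Theorem \ref{3mod4}, and then apply Mertens' theorem for primes in the arithmetic progression $3 \bmod 4$. Since $a$ is a perfect square coprime to each $p_i$, we have $\left(\frac{a}{p_i}\right) = 1$, so Theorem \ref{3mod4} applies to every factor in
\[
c_2(a; N_k^t) \ = \ \prod_{i=1}^k c_2(a; p_i^t).
\]

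For $t = 1$ the sum in Theorem \ref{3mod4} is empty and each factor equals $\frac{p_i+1}{p_i-1}$, so
\[
c_2(a; N_k) \ = \ \prod_{i=1}^k \frac{1 + 1/p_i}{1 - 1/p_i}.
\]
Mertens' theorem for primes in the residue class $3 \bmod 4$ gives $\sum_{p \le x,\, p \equiv 3 \bmod 4} 1/p = \tfrac12 \log\log x + O(1)$, and hence both $\prod_{p \le x,\, p \equiv 3 \bmod 4}(1 - 1/p)^{-1}$ and $\prod_{p \le x,\, p \equiv 3 \bmod 4}(1 + 1/p)$ are $\asymp (\log x)^{1/2}$. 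Multiplying these and evaluating at $x = p_k$ yields $c_2(a; N_k) \asymp \log p_k$.

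For $t \ge 2$ the formula simplifies to $c_2(a; p^t) = \frac{p^2 - 2p - 1}{p^2 - 1} + O(1/p^2)$, and a direct algebraic check shows
\[
\frac{(p^2 - 2p - 1)/(p^2 - 1)}{(1 - 1/p)^2} \ = \ 1 + O(1/p^2).
\]
Thus $\prod c_2(a; p_i^t)$ differs from $\prod (1 - 1/p_i)^2$ by a convergent factor, and Mertens again gives $\prod_{p \le x,\, p \equiv 3 \bmod 4}(1 - 1/p)^2 \asymp (\log x)^{-1}$. Evaluating at $x = p_k$ produces $c_2(a; N_k^t) \asymp (\log p_k)^{-1}$.

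To translate $\log p_k$ into $\log\log N_k$ in both cases, I would apply the Chebyshev-type estimate $\sum_{p \le x,\, p \equiv 3 \bmod 4} \log p \asymp x$, which yields $\log N_k = \sum_{i=1}^k \log p_i \asymp p_k$ and therefore $\log\log N_k \asymp \log p_k$, completing both asymptotic claims. The only real obstacle is the Mertens/Chebyshev input for primes in an arithmetic progression; once those are cited and the finitely many small primes (e.g.\ $p = 3$, where the $O$-term approximations are loose) are absorbed into the implicit constants, the remainder is bookkeeping.
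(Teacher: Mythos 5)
Your proposal is correct and is essentially the intended argument: the paper omits the proof of Theorem \ref{maxmin} as a straightforward generalization of the corresponding result in \cite{EKSY}, which proceeds exactly as you do, via multiplicativity (Proposition \ref{mult}), the closed form of Theorem \ref{3mod4} (giving $\frac{p+1}{p-1}$ for $t=1$ and $1-\frac{2}{p}+O(p^{-2})$ uniformly for $t\ge 2$), Mertens' theorem for primes $\equiv 3 \bmod 4$, and the Chebyshev-type bound $\log N_k \asymp p_k$. Your remarks about absorbing the small primes and the convergent $\prod(1+O(p^{-2}))$ factor cover the only points needing care, so nothing is missing.
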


\begin{thm}
Fix an integer $a$.  Let $n$ run through the positive integers
relatively prime to $a$.  Then
\begin{enumerate}
\item
\be
\frac{1}{\log \log n}\ \ll\ c_2(a;n)\ \ll\ \log \log n,
\ee
\item
\be
\lim_{n\to \infty} \sup c_2(a;n)=\infty \quad \mbox{and} \quad
\lim_{n\to \infty} \inf c_2(a;n)=0,
\ee
\item
\be
\lim_{n\to \infty} \sup \frac{\# S(a;n)}{\# D(a;n)}=\infty \quad \mbox{and}
\quad \lim_{n\to \infty} \inf \frac{\# S(a;n)}{\# D(a;n)}=0.
\ee
\end{enumerate}
\end{thm}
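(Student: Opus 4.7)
The plan is to reduce everything to a product over primes $p \equiv 3 \bmod 4$ dividing $n$ and then apply Mertens-type asymptotics. By Proposition \ref{mult}, $c_{2}(a;n) = \prod_{p^{t} \| n} c_{2}(a;p^{t})$; Corollary \ref{1mod4} kills the factors with $p \equiv 1 \bmod 4$, and Theorem \ref{thm:pis2indis2case} shows $c_{2}(a;2^{t})$ lies in a bounded interval as $t$ varies (in fact converges to one of $1/6,\ 1/2,\ 2,\ 6$ depending on $a \bmod 8$), contributing only $O(1)$ to $\log c_{2}(a;n)$. The analysis thus reduces to the product over $p \equiv 3 \bmod 4$.

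For part (1), I combine Theorem \ref{3mod4} (which handles $\left(\frac{a}{p}\right) = 1$) with the identity $c_{2}(a;n) = 1/c_{2}(-a;n)$ derived from Lemma \ref{SDrel} (which handles $\left(\frac{a}{p}\right) = -1$, since $-1$ is a non-residue mod $p$ when $p \equiv 3 \bmod 4$). A short calculation from Theorem \ref{3mod4} yields the uniform estimate $|\log c_{2}(a;p^{t})| = 2/p + O(1/p^{2})$ valid for all $t \ge 1$, so $|\log c_{2}(a;n)| \le 2 \sum_{p \mid n,\ p \equiv 3 \bmod 4} 1/p + O(1)$. If $n$ has exactly $k$ prime divisors $\equiv 3 \bmod 4$, then $n \ge M_{k}$, the product of the first $k$ such primes; by the prime number theorem for arithmetic progressions $\log M_{k} \sim p_{k}/2$, so $p_{k} \le (2+o(1))\log n$. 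Mertens' theorem for arithmetic progressions then gives $\sum_{p \le p_{k},\ p \equiv 3 \bmod 4} 1/p = \frac{1}{2}\log\log p_{k} + O(1) \le \frac{1}{2}\log\log\log n + O(1)$. The factor $2$ above and the $\frac{1}{2}$ here cancel, yielding $c_{2}(a;n) \ll \log\log n$; applying the same argument to $c_{2}(-a;n) = 1/c_{2}(a;n)$ furnishes the matching lower bound. The main delicacy is precisely this constant cancellation: allowing all primes instead of restricting to those $\equiv 3 \bmod 4$ would give only the weaker bound $(\log\log n)^{2}$.

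For part (2), if $a$ or $-a$ is a perfect square, Theorem \ref{maxmin} (possibly combined with the reciprocity $c_{2}(a;n) = 1/c_{2}(-a;n)$) directly delivers $\limsup c_{2}(a;n) = \infty$ and $\liminf c_{2}(a;n) = 0$. Otherwise, Dirichlet's theorem together with quadratic reciprocity shows that $\mathcal{P}_{\pm}(a) := \{p \equiv 3 \bmod 4 : \left(\frac{a}{p}\right) = \pm 1\}$ are both infinite; taking $n_{k}$ to be the product of the first $k$ primes in $\mathcal{P}_{+}(a)$, Theorem \ref{3mod4} at $t=1$ gives $\log c_{2}(a;n_{k}) = \sum_{p \in \mathcal{P}_{+}(a),\ p \le P_{k}} \log(1 + 2/(p-1)) \to \infty$ by Mertens, proving the limsup assertion; the analogous construction over $\mathcal{P}_{-}(a)$ yields the liminf assertion. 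Finally, for part (3), the two-to-one lifting inequalities $\#\bar{S}_{2}(a;n) \le \#S_{2}(a;n) \le 2\#\bar{S}_{2}(a;n)$ and $\#\bar{D}_{2}(a;n) \le \#D_{2}(a;n) \le 2\#\bar{D}_{2}(a;n)$ (each residue class mod $n$ lifts to at most two admissible unreduced values in $[2,2n-2]$ and $[-(n-2),n-2]$ respectively) sandwich $\#S_{2}(a;n)/\#D_{2}(a;n)$ between $c_{2}(a;n)/2$ and $2c_{2}(a;n)$, so (3) follows immediately from (2).
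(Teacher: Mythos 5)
Your proposal is correct, and it supplies an argument the paper itself omits (the authors defer to \cite{EKSY} for this theorem); the route you take --- multiplicativity via Proposition \ref{mult}, discarding the $p\equiv 1\bmod 4$ and $p=2$ factors as $O(1)$, the uniform local estimate $|\log c_2(a;p^t)|=2/p+O(1/p^2)$ from Theorem \ref{3mod4} together with the reciprocity $c_2(a;n)=1/c_2(-a;n)$, and then Mertens for the progression $3\bmod 4$ --- is exactly the standard one underlying the $a=1$ case in \cite{EKSY}, and your remark that the factor $2$ in the local estimate must cancel against the density $\tfrac12$ in Mertens (else one only gets $(\log\log n)^2$) identifies the one genuinely delicate point. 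Two small cosmetic caveats: Theorem \ref{maxmin} as stated assumes $a$ coprime to \emph{all} primes $\equiv 3\bmod 4$, so in part (2) one should delete the finitely many $p_i\mid a$ from $N_k$ (which does not affect the asymptotics); and for $p=3$ the error term in $\log c_2(a;3^t)$ is not small, but that single prime contributes only a bounded factor, which your $O(1)$ absorbs. Part (3) via the two-to-one lifting sandwich is also correct.
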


\begin{thm} \label{84}
For a fixed nonzero integer $a$, let
$E_a$ denote the set of positive integers $n$ relatively prime to $a$ such that
$\left(\frac{a}{p}\right)=1$ for every prime $p \equiv 3 \bmod 4$ dividing $n$.
Let $C_a(L) = \{n \in E_a: c_2(a;n) > L\}$.
Define $E_a(x) = \{n \in E_a: n \le x\}$ and
$C_a(L,x) = \{n \in C_a(L): n \le x\}$.
Then the lower density  of $C_a(L)$ in $E_a$, defined by
$\lim \inf \#C_a(L,x)/\#E_a(x)$, satisfies the inequality
\be \label{liminf}
\lim_{x\to \infty}\inf \frac{\#C_a(1,x)}{\#E_a(x)} \geq\
K_a \prod \left( 1-\frac{1}{p^2}\right),
\ee
where the product is over all primes $p \equiv 3 \bmod 4$ for which
$\left(\frac{a}{p}\right)=1$, and where
\bea
\fourcase{K_a & \ = \ & }{1}
{$a\equiv 0 \bmod 2$}{63/64}{$ a\equiv 1 \bmod \ 8$}{31/32}{$a\equiv 5
 \bmod 8$}{15/16}{$a \equiv 3 \bmod 4$.}
\eea
Furthermore, for any constant $L>0$, the
lower density of $C_a(L)$
in $E_a$ is positive.
\end{thm}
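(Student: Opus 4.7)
The plan is to exploit multiplicativity: by Proposition \ref{mult}, $c_2(a;n) = \prod_{p^e \| n} c_2(a;p^e)$, and by Corollary \ref{1mod4} every factor with $p \equiv 1 \bmod 4$ equals $1$. Thus $c_2(a;n)$ depends only on the 2-part of $n$ and on the ``3 mod 4 part'' of $n$ (the product of prime power factors with $p \equiv 3 \bmod 4$). A convenient sufficient condition for $c_2(a;n) > 1$ is that (i) the exponent $e = v_2(n)$ satisfies $c_2(a;2^e) \geq 1$, and (ii) the 3 mod 4 part of $n$ is squarefree and contains at least one prime factor. Under (ii), Theorem \ref{3mod4} gives $c_2(a;p) = 1 + 2/(p-1) > 1$ at each relevant prime, so the full product exceeds $1$ strictly.

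I would then compute the two local densities separately. Using the explicit formulas in Theorem \ref{thm:pis2indis2case}, one identifies for each residue class of $a \bmod 8$ the set of ``good'' exponents $e$ with $c_2(a;2^e) \geq 1$; summing $2^{-e-1}$ over such $e$ yields $K_a$. When $a$ is even, $n \in E_a$ is forced to be odd so $K_a = 1$; when $a$ is odd, a short check gives the other three values (for example, when $a \equiv 3 \bmod 4$ only $e = 3$ fails, giving $K_a = 1 - 1/16 = 15/16$). Independently, a standard sieve argument shows that the relative density within $E_a$ of $n$ whose 3 mod 4 part is squarefree equals $\prod_{p \equiv 3 \bmod 4,\, (a/p) = 1}(1 - 1/p^2)$; since the relative density of $n \in E_a$ with no prime factor $\equiv 3 \bmod 4$ is zero (the allowed primes $\equiv 3 \bmod 4$ form a positive proportion of primes, so removing them lowers the Wirsing-type exponent in the counting function for $E_a$), condition (ii) contributes the full local factor. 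By the independence of conditions at distinct primes, the product of these two local densities is a lower bound for the desired liminf, yielding \eqref{liminf}.

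For the positive lower density of $C_a(L)$ with arbitrary $L > 0$, I would fix a finite set of primes $q_1,\dots,q_k \equiv 3 \bmod 4$ with $(a/q_i) = 1$ and $\prod_{i=1}^k(1 + 2/(q_i-1)) > L$. The collection of $n \in E_a$ with good 2-part, divisible by each $q_i$ to exactly the first power, and squarefree on all other primes $\equiv 3 \bmod 4$, has positive relative density in $E_a$ by the same independence, and each such $n$ satisfies $c_2(a;n) > L$.

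The main obstacle is the analytic justification of the local-density factorization within the sparse set $E_a$: one must invoke a Wirsing- or Delange-type theorem (or the Euler product of the Dirichlet series $\sum_{n \in E_a} n^{-s}$) to convert the formal local factors into the true relative density of the corresponding subsets of $E_a$, and in particular to verify that the ``empty 3 mod 4 part'' contribution vanishes. All other steps are routine bookkeeping given the explicit case analyses already accomplished in Theorems \ref{thm:pis2indis2case} and \ref{3mod4}.
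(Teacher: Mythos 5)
The paper does not actually supply a proof of Theorem \ref{84}; it is one of the three results the authors dismiss as ``straightforward generalizations of results from \cite{EKSY}'' with proofs omitted. Your route is clearly the intended one, and the parts you make explicit check out: by Proposition \ref{mult} and Corollary \ref{1mod4} the ratio $c_2(a;n)$ is governed by the $2$-part and the squarefree-or-not structure of the primes $p\equiv 3\bmod 4$ dividing $n$; Theorem \ref{3mod4} with $t=1$ gives $c_2(a;p)=1+2/(p-1)>1$; and tabulating $c_2(a;2^e)$ from Theorem \ref{thm:pis2indis2case} together with Lemma \ref{SDrel} (to get $\#\bar{D}_2(a;2^e)=\#\bar{S}_2(-a;2^e)$ for $e\le 4$) confirms exactly your lists of failing exponents ($e\ge 6$ for $a\equiv 1\bmod 8$, $e\ge 5$ for $a\equiv 5\bmod 8$, only $e=3$ for $a\equiv 3\bmod 4$), hence the stated $K_a$. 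The analytic input you flag --- a Delange/Wirsing-type theorem or the Euler product of $\sum_{n\in E_a}n^{-s}$ to convert the local conditions at $2$ and at the primes $p\equiv 3\bmod 4$ into genuine relative densities inside the sparse set $E_a$ --- is the real content that the paper sweeps under the rug, and you have correctly identified it rather than proved it; as written your argument is a correct reduction to that standard but nontrivial fact.

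One genuine caveat, which afflicts the theorem as stated and not just your proof: if $-a$ is a perfect square then $\left(\frac{a}{p}\right)=\left(\frac{-1}{p}\right)=-1$ for every prime $p\equiv 3\bmod 4$, so $E_a$ consists entirely of integers with \emph{no} prime factor $\equiv 3\bmod 4$. Your claim that such $n$ have relative density zero in $E_a$ (and your appeal to a positive proportion of allowed primes $\equiv 3\bmod 4$, needed again in the last paragraph to make $\prod_i(1+2/(q_i-1))$ exceed an arbitrary $L$) then fails, and indeed for $a=-1$ one has $c_2(-1;n)=c_2(-1;2^{v_2(n)})\le 6$ on all of $E_{-1}$, so the strict-inequality density drops to $1/64$ rather than $15/16$ and $C_{-1}(L)$ is empty for large $L$. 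So either this degenerate family must be excluded, or the inequality in the definition of $C_a(1)$ read as $c_2(a;n)\ge 1$; in all other cases the density of allowed primes $\equiv 3\bmod 4$ is $1/4$ or $1/2$ and your argument goes through.
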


For example, if $a$ is an odd power of 2,
then the lower density in \eqref{liminf} exceeds 97\%.
Note that if the condition $\left(\frac{a}{p}\right)=1$
is replaced by $\left(\frac{a}{p}\right)=-1$ throughout
the statement of Theorem \ref{84}, then by Lemma \ref{SDrel},
\eqref{liminf} holds with the inequality $c_2(a;n) > 1$ replaced by
$c_2(a;n) < 1$.

\section{Cardinality of $\bar{S}_d(m;a;n)$ for $d>2$}

We now turn our attention to modular hyperbolas with higher dimension ($d >2)$.
Suppose that $p >7$ for every prime $p$ dividing $n$.  Then
Theorem \ref{higherdim} shows that the higher dimensional generalized
sumsets $\bar{S}_d(m;a;n)$ all have cardinality $n$.  In particular, this
cardinality is the same for every value of $m$, i.e., there is no dependence
on the number of plus and minus signs.

\begin{thm}\label{higherdim}
If the prime factors of $n$ all exceed 7, then
$\#\bar{S}_d(m;a;n)= n$.
\end{thm}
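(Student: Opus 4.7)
The plan is to reduce the theorem to a single base case ($d=3$, $t=1$) by two chained reductions, handle that base case by a Hasse--Weil count on a plane cubic, and finally Hensel-lift to all $t$.

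By Proposition~\ref{mult}, it suffices to prove $\#\bar{S}_d(m;a;p^t) = p^t$ for each prime power factor $p^t$ of $n$, so from now on $p>7$. I would then reduce to $d=3$ by induction on $d$: assuming $\bar{S}_d(m';a';p^t) = \Z/p^t\Z$ for every $m'$ and every $a' \in (\Z/p^t\Z)^{*}$, fixing $x_{d+1} = 1$ in the $(d+1)$-dimensional problem forces $x_1 \cdots x_d = a$ and collapses the signed sum to $\sum_{i=1}^d \epsilon_i x_i \pm 1$, which by the inductive hypothesis already exhausts $\Z/p^t\Z$ after translation. Finally, substitutions of the form $x_i \mapsto -x_i$ and $a \mapsto -a$ identify the four possible sign patterns when $d=3$, so everything reduces to showing $\bar{S}_3(3;a;p^t) = \Z/p^t\Z$ for every $a \in (\Z/p^t\Z)^{*}$.

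For the base case $t=1$, fix $\tau \in \F_p$ and $a \in \F_p^{*}$. Eliminating $z = \tau - x - y$, I would analyze the plane cubic $C_0 \colon xy(\tau - x - y) = a$, whose affine $\F_p$-points are exactly the desired solutions (and automatically satisfy $x, y, \tau - x - y \ne 0$ since $a \ne 0$). One checks that the projective closure $F(X,Y,Z) = XY(\tau Z - X - Y) - aZ^3$ is absolutely irreducible, since any putative linear factor $L$ must have $L|_{Z=0} \in \{X, Y, X+Y\}$ up to scalar, but matching the $Z^3$-coefficient then forces $a=0$. The curve has exactly three $\F_p$-rational points at infinity, namely $[1:0:0]$, $[0:1:0]$, $[1:-1:0]$, and the Jacobian criterion shows it is smooth except when $\tau^3 = 27a$, in which case the unique singularity is the diagonal point $(\tau/3, \tau/3, \tau/3)$, which is itself a solution. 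In the smooth case, Hasse--Weil for the resulting genus-$1$ elliptic curve gives $\#C(\F_p) \geq p + 1 - 2\sqrt{p}$, so the affine count is at least $p - 2 - 2\sqrt{p}$, positive exactly when $p > 4 + 2\sqrt{3} \approx 7.46$---matching the hypothesis $p > 7$ precisely. In the singular case the normalization is $\mathbb{P}^1$, giving $\#C(\F_p) \geq p$ and thus at least $p - 3$ affine points.

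For $t \geq 2$ I would Hensel-lift. The $2 \times 2$ minors of the Jacobian of $(xyz - a,\ x+y+z-\tau)$ are $z(y-x),\ y(z-x),\ x(z-y)$, which vanish simultaneously only on the diagonal $x=y=z$. Since $C_0$ meets the diagonal in at most the one singular point (which only appears when $\tau^3=27a$), the smooth-point count from the base case is still strictly positive for $p > 7$, and any such smooth $\F_p$-point lifts uniquely to a solution mod $p^t$ for every $t \geq 1$. The main obstacle is the quantitative base case: the Hasse--Weil bound has to defeat the deficit from three $\F_p$-rational points at infinity so as to clear zero at $p = 11$, which is exactly the source of the hypothesis $p > 7$. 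A secondary point to handle carefully is absolute irreducibility of the cubic, without which the genus-$1$ bound would not apply.
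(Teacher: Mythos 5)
Your argument is correct, and it shares the paper's skeleton --- reduce via Proposition \ref{mult} to prime powers, reduce to $d=3$ with all plus signs by setting the extra variables equal to $1$ and negating variables (and $a$) to absorb sign patterns, then show $xy(b-x-y)\equiv a \bmod p^t$ is solvable --- but the two key steps are executed by a genuinely different method. The paper eliminates one more variable: substituting $y\mapsto y^{-1}$ turns the equation into a quadratic in $x$, so it suffices to find $y$ making the cubic discriminant $R(y)$ a nonzero square mod $p$; this is done by bounding $\sum_y\left(\frac{R(y)}{p}\right)$ by $2\sqrt{p}$ via the character-sum form of Weil's theorem (after checking $R$ is not a constant times a square, using its discriminant $16a(b^3-27a)$), and the lift to $p^t$ is then free because a nonzero square mod $p$ is a square mod $p^t$. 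You instead apply Hasse--Weil directly to the projective plane cubic, which forces the bookkeeping you carry out (absolute irreducibility, the three rational points at infinity, the singularity exactly at $b^3=27a$ --- note this is precisely where the paper's discriminant $16a(b^3-27a)$ vanishes, so the two analyses agree on the degenerate locus), and you lift by multivariate Hensel at smooth points of the complete intersection. Both routes yield a threshold forcing $p\ge 11$. Two small points to tighten: the irreducibility check needs slightly more than ``matching the $Z^3$-coefficient'' (substitute the zero locus of the putative linear factor into $F$ and compare all coefficients; this does force $a=0$), and a smooth point of a two-equation, three-variable system lifts non-uniquely mod $p^t$ --- existence of a lift is all you need. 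The paper's route is marginally more elementary, resting on a one-variable character sum rather than the geometry of a projective cubic; yours handles all $t$ uniformly through smoothness and makes the degenerate case geometrically transparent.
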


\begin{proof} Let $q = p^t$ for a prime $p > 7$.
By Proposition \ref{mult}, it suffices to prove that
$\#\bar{S}_d(m;a;q)= q$.  We will show that for every $a$ coprime to $q$ and
every $b \bmod q$, the system of congruences
\bea \label{system}
x_1 + \cdots + x_d  & \ \equiv \ & b \bmod q \nonumber \\
x_1 \cdots x_d \ & \equiv \ & a \bmod q
\eea
has a solution. This suffices, because $x_i$ could be
replaced by $q-x_i$ for any collection of subscripts $i$.
If \eqref{system} can always be solved for $d=3$,
then it can always be solved for any $d >3$, by setting $x_i=1$ for $i >3$.
Thus assume that $d=3$.

Solving \eqref{system} is equivalent to solving the congruence
$xy(b-x-y) \equiv a \bmod q$ for $x, y \in (\Z/q\Z)^\ast$.
Replacing $y$ by $y^{-1}$ and then multiplying by $y$, we see that
this is equivalent to solving
\be \label{quadpoly}
x^2 + x(y^{-1} - b) + ay \equiv 0 \bmod q.
\ee
The quadratic polynomial in $x$ in \eqref{quadpoly} has discriminant
\be \label{discrim}
(-4ay^3 +b^2y^2 -2by +1)/y^2.
\ee
Let $R(y) \in (\Z/p\Z)[y]$ denote the cubic polynomial in $y$ obtained by
reducing the numerator in \eqref{discrim} $\bmod p$.  To solve
\eqref{quadpoly}, it remains to show that there exists $y \in (\Z/p\Z)^\ast$
for which $R(y)$ is a non-zero square $\bmod \ p$; this is
because a non-zero square $\bmod \ p$ is also
a square $\bmod \ q$ (see p. 46 of \cite{IR}).

Suppose for the purpose of contradiction that no term in the sum
\be
\sum_{y=1}^{p-1} \left(\frac{R(y)}{p}\right)
\ee
is equal to 1.  Then since $R(y)$ has at most 3 zeros in $(\Z/p\Z)^\ast$,
we have
\be \label{evalS}
S: =\sum_{y=0}^{p-1} \left(\frac{R(y)}{p}\right) = w-p,
\ee
for some $w \in \{1,2,3,4\}$.

Let $D$ denote the discriminant of $R(y)$.  Then
$D \equiv 16a(b^3 -27a) \bmod p$, and so $D$ vanishes if and only if
$a \equiv (b/3)^3 \bmod  p$.  When $D$ vanishes, it follows that
$b \in (\Z/p\Z)^\ast$ and $y=3/(4b)$ is a simple zero of $R(y)$.  We
conclude that $R(y)$ cannot equal a constant times the square of a polynomial
in $(\Z/p\Z)[y]$.  Therefore (see equation (6.0.2) in \cite{BEW}) we can
apply Weil's bound to conclude that $|S| < 2\sqrt{p}$.  Together with
\eqref{evalS}, this yields
\be
p -  2\sqrt{p} < w \le 4,
\ee
which contradicts the fact that $p > 7$.
\end{proof}

We remark that the conditions $p >7$ cannot be weakened in
Theorem \ref{higherdim}.  For example, \eqref{quadpoly} has no
solution when
$p=q=2$,  $b=0$ and $a=1$; when
$p=q=3$ and $b=a=1$; when
$p=q=5$, $b=1$ and $a=2$; and when
$p=q=7$, $b=0$ and $a=3$.


\section{Conclusion and Future Research}
We generalized work of \cite{EKSY} on the modular hyperbola $H_2(1,n)$
by examining more general modular hyperbolas $H_d(a;n)$.
The two-dimensional case ($d=2$) provided interesting conditions on
$a$ and $n$ for sum dominance and difference dominance. On the other hand,
for higher dimensions ($d >2$), all possible sums and difference are
realized when the prime factors of $n$ all exceed 7.

The following are some topics for future and ongoing research:
\begin{enumerate}

\item We can study the cardinality of sumsets and difference sets of the intersection of modular hyperbolas with other modular objects such as lower dimensional modular hyperbolas and modular ellipses. See \cite{HK} for work on the cardinality of the intersection of modular circles and $H_2(1;n)$.

\item  Extend Theorem \ref{twoprimes} by estimating
$c_2(a;n)$ in cases where $n$ has more than two prime factors of
the form $4k+3$.

\item Extend Theorem \ref{higherdim} by finding the cardinality of the
generalized higher dimensional sumsets in cases where $(n, 210) > 1$.

\item In higher dimensions ($d>2$), nearly every sum and difference is realized
for $\bar{H}_d(a;n)$.  The situation becomes more interesting if we replace
$\bar{H}_d(a;n)$ by a random subset chosen according to some probability
distribution depending on $d$.  If $S$ and $D$ denote the corresponding
sumset and difference set, we can then compare the random variables
$\#S$ and  $\#D$.
\end{enumerate}


\appendix


\section{Additional Proofs}
The following proofs are a natural extension of the proofs given by \cite{EKSY}, and are included for completeness.

\subsection{Proof of Proposition \ref{mult}}\label{pf mult}

\begin{proof}[Proof of Proposition \ref{mult}]
Consider
\bea
g: \bar{S}_d(m;a;n) \longrightarrow \prod_{i=1}^{k}\bar{S}_d(m;a \bmod p_i^{e_i}; p_i^{e_i})
\eea  defined by
\bea
g(x)\ = \ (x \bmod p_1^{e_1}, \dots, x \bmod p_k^{e_k}).
\eea
We claim $g$ is a bijection.

To show $g$ is injective, suppose $g(x)=g(y)$. Then we have $(x \bmod p_1^{e_1}, \dots, x \bmod p_k^{e_k})=(y \bmod p_1^{e_1}, \dots, y \bmod p_k^{e_k}) \Rightarrow x \equiv y \bmod p_i^{e_i}$ for $i=1, \dots, k$. Thus, by the Chinese Remainder Theorem, we have that $x\equiv y \bmod n$. But $0\leq x,y<n \Rightarrow x=y$, so $g$ is injective.

To show $g$ is surjective, let $(\alpha_1, \dots, \alpha_k) \in \prod_{i=1}^k \bar{S}_d (m;a \bmod p_i^{e_i};p_i^{e_i})$. Then, for each $i \in \{1, \dots, k\}$, there exists $(x_{1_i}, \dots, x_{d_i}) \in H_d(a;p_i^{e_i})$ such that $x_{1_i} + \cdots + x_{m_i} - \dots - x_{d_i} \equiv \alpha_i \bmod p_i^{e_i}.$ By the Chinese Remainder Theorem, the system of congruences
\bea
 x_r \ \equiv\ x_{r_i} \bmod p_i^{e_i}
\eea
where $1 \leq r \leq d$ has a unique solution $x_1=x_1', \dots, x_d=x_d' \bmod n$. Since $x_{1_i} \cdots x_{d_i} \equiv a \bmod p_i^{e_i}$ for all $i\in\{1,\dots, k\}$, we have, again by the Chinese remainder theorem, $x_1' \cdots x_d' \equiv a \bmod n$. So $g(x_{1}' + \cdots + x_{m}' - \cdots - x_{d}' \bmod n)=(\alpha_1, \dots, \alpha_k)$, and hence $g$ is a bijection, which completes the proof.
\end{proof}

\subsection{Proof of Lemma \ref{2k}} \label{pf 2k}

Before proving Lemma \ref{2k}, we state a useful lemma that is a simple observation and immediate generalization of a result from \cite{EKSY}.

\begin{lem} \label{even}
Let $(x_0,y_0) \in H_2(a; p^t)$. Then $x_0-y_0 \equiv 2k_1 \bmod p^t$ and $x_0+y_0 \equiv 2k_2 \bmod p^t$ for some $k_1, k_2 \in \Z$.
\end{lem}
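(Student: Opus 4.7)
The plan is to split the argument into the two cases $p$ odd and $p=2$, since the content of the lemma really only bites in the latter case.

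When $p$ is odd, the claim is essentially vacuous: $2$ is a unit in $\Z/p^t\Z$, so every residue class can be written as $2k \bmod p^t$. In particular we can take
\[
k_1 \;\equiv\; 2^{-1}(x_0-y_0) \bmod p^t, \qquad k_2 \;\equiv\; 2^{-1}(x_0+y_0) \bmod p^t,
\]
and we are done without ever using the hyperbola condition.

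The case $p=2$ is where the hypothesis $(x_0,y_0)\in H_2(a;2^t)$ enters. Since $\gcd(a,2^t)=1$, the residue $a$ is odd, and so $x_0y_0 \equiv a \bmod 2^t$ forces $x_0 y_0$ to be odd. Hence both $x_0$ and $y_0$ are odd integers (recall $1 \le x_0,y_0 < 2^t$), which makes $x_0+y_0$ and $x_0-y_0$ even integers. Choosing $k_1,k_2 \in \Z$ with $x_0-y_0 = 2k_1$ and $x_0+y_0 = 2k_2$ on the nose (not merely mod $2^t$) gives the desired representations.

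There is no real obstacle here; the only point worth flagging in the write-up is that the lemma is substantive exactly when $p=2$, and that in that case the coprimality of $a$ and $n$, together with the requirement that $x_0,y_0$ lie in the reduced residue system, is what guarantees the parity of $x_0$ and $y_0$.
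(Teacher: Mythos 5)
Your proof is correct and follows the same route as the paper's: for odd $p$ the statement is immediate because $2$ is invertible modulo $p^t$, and for $p=2$ the coprimality of $a$ to $2^t$ forces $x_0$ and $y_0$ to be odd, so their sum and difference are even. No issues.
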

\begin{proof}
If $p=2$, then $x_0$ and $y_0$ are both odd since
they are coprime to $p^t$,
so their sum and difference are even. If $p\neq 2$, then $2^{-1}$ exists${}
\bmod p^t$, so $x_0-y_0 \equiv 2k_1 \bmod p^t$ and $x_0+y_0 \equiv 2k_2 \bmod
p^t$ both have solutions.
\end{proof}

\begin{proof}[Proof of Lemma \ref{2k}]
Since the proofs are similar, we only prove (1).

``$\Rightarrow$'' Let $(x_0,y_0) \in H_2(a; p^t)$. By Lemma \ref{even}, $x_0-y_0\equiv 2k \bmod p^t$ for some $k \in \Z$. Note $x_0-y_0 \in \bar{D}_2(a;p)$. Upon completing the square, we obtain $k^2+a \equiv (x_0-k)^2 \bmod p^t$. Thus $k^2+a$ is a square $\bmod \ p^t$.

``$\Leftarrow$'' Suppose $k^2+a$ is a square${}\bmod p^t$. Then there exists $c \in \Z$ such that $c^2-k^2 \equiv a \bmod p^t$. It follows that $(x_0,y_0)=( (c+k) \bmod p^t, (c-k) \bmod p^t) \in H_2(a;p^t)$ and $ x_0-y_0\equiv 2k \bmod p^t$. \\

Now we show that $d_{p^t,a}$ is a bijection. If $p >2$, then $(d_{p^t,a})^{-1}(x)=2^{-1}x \bmod p^t$ is the inverse of $d_{p^t,a}(x)$.

Now suppose $p=2$. We first show $d_{p^t,a}$ is injective. Choose $k \in \Z$ such that $ 0\leq k < 2^{t-1}$ and $k^2+a$ is a square${}\bmod 2^t$. Consider $k_1=(k-2^{t-1}) \bmod 2^t$. Then $k_1^2+a$ is also a square${}\bmod 2^t$ (because $k_1^2+a\equiv k^2+2(2^{t-1})-2^tk+a\equiv k^2+a \mod 2^{t-1}$). The congruence $2x\equiv 2k \mod 2^t$ has exactly two distinct solutions, which must be $k$ and $k_1$. Since either $k$ or $k_1$ is less than $2^{t-1}$, we conclude that $d_{2^t,a}$ is injective.

To show $d_{p^t,a}$ is surjective, let  $t\in \bar{D}_2(a;2^t)$. Then by Lemma \ref{even} there exists $(x_0,y_0) \in H_2(a;2^t)$ such that $x_0-y_0\equiv t\equiv 2k \mod 2^t$ for some $k \in \Z$. Hence, since $t=2k$, $d_{p^t,a}(k)=t$.
\end{proof}

\subsection{Proof of Theorem \ref{thm:pis2indis2case} cases}\label{pf pis2indis2case}

The next proposition is from \cite{IR} (see page 46). It gives us a quick way to count squares modulo prime powers.

\begin{prop}\label{solve}
For the congruence
\begin{center}
$x^2\equiv a \bmod p^t$
\end{center}
where $p$ is a prime and $a$ is an integer such that $p\nmid a$,
we have the following:
\begin{enumerate}
\item If $p\neq2$ and the congruence $x^2 \equiv a \bmod p$ is solvable,
then for every
$t\geq 1$
the congruence $x^2\equiv a \bmod p^t$ is solvable with precisely two
distinct solutions.
\item
If $p=2$ and the congruence $x^2 \equiv a \bmod 2^3$ is solvable,
then for every $t \geq 3$ the congruence $x^2\equiv a \bmod 2^t$ is
solvable with precisely four distinct solutions.
\end{enumerate}
\end{prop}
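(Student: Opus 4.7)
The plan is to treat the two cases separately, using Hensel-style lifting arguments, since this is a classical result about lifting square roots from $\mathbb{Z}/p\mathbb{Z}$ up the tower of prime powers.

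For part (1), with $p$ odd, I would first argue existence by induction on $t$, starting from a solution $x_1$ with $x_1^2 \equiv a \bmod p$. Given $x_t$ with $x_t^2 \equiv a \bmod p^t$, I write the candidate lift as $x_{t+1} = x_t + p^t c$, expand $(x_t + p^t c)^2 \equiv x_t^2 + 2 p^t x_t c \bmod p^{t+1}$, and solve for $c \bmod p$. This is possible because $p \nmid a$ forces $p \nmid x_t$, and $p$ being odd guarantees that $2x_t$ is a unit $\bmod \ p$, so $c$ is uniquely determined. For uniqueness of the count, if $x^2 \equiv y^2 \bmod p^t$ then $p^t \mid (x-y)(x+y)$; since $\gcd(x-y, x+y)$ divides $2x$ and is coprime to $p$ (using $p$ odd and $p \nmid x$), the factor $p^t$ must lie entirely in one of $x-y$ or $x+y$, yielding exactly the two solutions $\pm x_t$.

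For part (2), with $p=2$, the derivative $2x$ is no longer invertible, so naive Hensel fails and this is the main obstacle. I would instead lift using the ansatz $x_{t+1} = x_t + 2^{t-1} c$ for $t \ge 3$, so that $x_{t+1}^2 \equiv x_t^2 + 2^t x_t c \bmod 2^{t+1}$ (the $2^{2t-2}c^2$ term vanishes $\bmod \ 2^{t+1}$ once $t \ge 3$). Since $x_t$ is odd, the congruence for $c$ is solvable $\bmod \ 2$, producing a lift. The base case $t=3$ is given by hypothesis.

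For the count of four solutions, I would observe that $\pm x_t$ and $\pm x_t + 2^{t-1}$ are all square roots of $a \bmod 2^t$: the verification $(x_t + 2^{t-1})^2 \equiv x_t^2 \bmod 2^t$ uses $2(t-1) \ge t$. These four residues are pairwise distinct $\bmod \ 2^t$ precisely when $t \ge 3$, since distinctness reduces to checking $2x_t \not\equiv 0, 2^{t-1} \bmod 2^t$, both of which hold because $x_t$ is odd and $t \ge 3$. To see there are no further solutions, I would argue as in the odd case: if $x^2 \equiv y^2 \bmod 2^t$ with $x,y$ odd, write $x-y = 2u$ and $x+y = 2v$, note that $u+v = x$ is odd so exactly one of $u,v$ is even, and deduce $2^{t-2}$ divides the even one. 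Thus $y \equiv \pm x \bmod 2^{t-1}$, which combined with $y$ odd produces precisely the four residues listed above.
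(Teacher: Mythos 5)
Your proof is correct. Note, however, that the paper does not prove this proposition at all: it is quoted as a known result from Ireland and Rosen (\cite{IR}, p.~46), so there is no internal argument to compare yours against. What you have written is the standard textbook derivation, and it is complete: for odd $p$ the Hensel step works because $2x_t$ is a unit, and your counting argument via $p^t \mid (x-y)(x+y)$ with $\gcd(x-y,x+y) \mid 2x$ coprime to $p$ correctly yields exactly the two solutions $\pm x$. For $p=2$ the modified ansatz $x_{t+1}=x_t+2^{t-1}c$ is the right fix for the failure of naive Hensel (the $2^{2t-2}c^2$ term is indeed negligible mod $2^{t+1}$ exactly when $t\ge 3$), and your argument that any two odd square roots satisfy $y\equiv\pm x \bmod 2^{t-1}$ --- via $x-y=2u$, $x+y=2v$, $u+v=x$ odd, so $2^{t-2}$ divides whichever of $u,v$ is even --- pins down precisely the four residues $\pm x$, $\pm x+2^{t-1}$, which you correctly check are pairwise distinct for $t\ge 3$. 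Supplying this self-contained proof is a reasonable thing to do where the paper merely cites a reference; there is nothing to fix.
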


\begin{proof}[Proof of Theorem \ref{thm:pis2indis2case}] We break the analysis by the congruence class of $a$ modulo 8.\\

\noindent \underline{Case 1: Difference set for $a\equiv 1 \bmod 8$.} By Proposition \ref{2k}, we know
\be
\#\bar{D}(a;2^t) \ = \ \#\{k: k^2+1+8m\ \text{\rm is a square} \bmod 2^t, 0 \leq k <2^{t-1}\}.
\ee
We claim that
\be k^2+1+8m\ \text{\rm is a square} \bmod 2^t\ \Leftrightarrow\ k=4l\ \text{\rm for some}\ l \in \Z.
\ee

\noindent ``$\Rightarrow$'' Assume $k^2+1+8m$ is a square${}\bmod 2^t$. Then $k^2+1$ is a square${}\bmod 8$, which yields $k\equiv 0,4 \bmod 8$. Hence $k=4l$ for some $l \in \Z$. \\

\noindent ``$\Leftarrow$'' Assume $k=4l$ for some $ l \in \Z$. We want to show $k^2+1+8m=(4l)^2+1+8m$ is a square${}\bmod 2^t$. Reducing modulo 8 gives us $(4l)^2+1+8m\equiv 1 \bmod 8$, which is a square for all $l$. Hence, by the second part of Proposition \ref{solve}, $(4l)^2+1+8m$ is a square $\bmod \ 2^t$ for all $l$. Thus
\be
\{k: k^2+1+8m\ \text{is a square} \bmod 2^t, 0 \leq k < 2^{t-1}\} \ = \ \{4l: 0 \leq l < 2^{t-3}\}.
\ee

\ \\

\noindent \underline{Case 2: Difference set for $a \equiv 5 \bmod 8$.}
Using a similar argument from above, we show that
\be
k^2+5+8m \ \text{\rm is a square} \bmod 2^t\ \Leftrightarrow\ k=2+4l\ \text{\rm for some} l \in \Z.
\ee

\noindent ``$\Rightarrow$'' Assume $k^2+5+8m$ is a square ${}\bmod 2^t$. Then $k^2+5 +8m\equiv k^2+5$ is a square ${}\bmod 8$, which implies $k\equiv 2,6 \bmod 8$ or $k=2+4l$ for some $l \in \Z$. \\

\noindent ``$\Leftarrow$'' Assume $k=2+4l$ for some $ l \in \Z$. Reducing modulo 8 gives us $(2+4l)^2+5+8m\equiv 4+16l+16l^2+5+8m\equiv 1 \bmod 8$, which is a square for all $l$. Hence, by the second part of Proposition \ref{solve}, $(4l)^2+1+8m$ is a square${}\bmod 2^t$ for all $l$. Since $k=2+4l$ and $0 \leq k < 2^{t-1}$ we have $0 \leq l < 2^{t-3}$. Hence
\be
\{k: k^2+1+8m\ \text{\rm is a square} \bmod 2^t, 0 \leq k < 2^{t-1}\}=\{4l: 0 \leq l < 2^{t-3}\}.
\ee

\ \\

\noindent \underline{Case 3: Sum set for $a \equiv 1 \bmod 8$.}
Assume $a\equiv 1 \bmod 8$. Thus $a=1+8t$ for some $t\in \Z$. We show that
\be
\#\{k: k^2-1-8t\ \text{\rm is a square} \bmod 2^t, 0 \leq k < 2^{t-1}\}\ = \ 2\#\{k^2\bmod 2^{t-4}\}.
\ee
Let $j \in \{k: k^2-1-8t$ is a square${}\bmod 2^t, 0 \leq k < 2^{t-1}\}$. So $j^2-1-8t$ is a square${}\bmod 2^t$. Note that $j=2l+1$ because if not, we can reduce${}\bmod 4$ and notice that $-1$ is a square${}\bmod 4$, which is a contradiction. Also, $(l^2+l-2t)$ is a square${}\bmod 2^{t-2}$. Hence
\bea
& & \{k: k^2-1-8t\ \text{\rm is a square} \bmod 2^t, 0 \leq k < 2^{t-1}\} \nonumber\\
& & \ \ \ \ = \  \#\{l: l^2+l-2t\  \text{\rm is a square} \bmod 2^{t-2}, 0 \leq l < 2^{t-2}\}. \eea
As
\bea
& & \#\{l: l^2+l-2t\ \text{\rm is a square} \bmod 2^{t-2}, 0 \leq k < 2^{t-2}, l\ {\rm even}\}\nonumber\\
& & \ \ \ \ \ =\ \#\{l: l^2+l-2t\ \text{is a square} \bmod 2^{t-2}, 0 \leq l < 2^{t-2}, l\ {\rm odd}\},
\eea
we find
\bea
& & \#\{l: l^2+l-2t\ \text{\rm is a square} \bmod 2^{t-2}, 0 \leq l < 2^{t-2}\}\nonumber\\
& & \ \ \ \ \ = \ \# 2  \{l: l^2+l-2t\ \text{\rm is a square} \bmod 2^{t-2}, 0 \leq l < 2^{t-2}, l\ \text{\rm odd}\}. \eea
If $l$ is odd, then $l^{-1}$ exists. Note that if $l^2+l-2t\equiv c^2 \bmod 2^{t-2}$, then multiplying by $l^{-1}$ yields $1+l^{-1}+2l^{-2}t\equiv l^{-2}c$, which is also a square${}\bmod 2^{t-2}$. Hence
\be
\#\{l: 1+l^{-1}-2l^{-2}t\ \text{\rm is a square} \bmod 2^{t-2}, 0 \leq l < 2^{t-2}, l\ \text{\rm odd} \}.
\ee

Since $1+l^{-1}+2l^{-2}t$ is even and a square, $1+l^{-1}+2l^{-2}t\equiv 4m^2 \bmod 2^{t-2}$ for some $m, 0 \leq m < 2^{t-4}$. Therefore
\be
\#\{l: l^2+l-2t\ \text{\rm is a square} \bmod \ 2^{t-2}, 0 \leq l < 2^{t-1}, l\ {\rm odd} \} \ = \
\#\{k^2 \bmod 2^{t-4} \};
\ee the proof of this case is completed by Proposition \ref{Stangl}.
\end{proof}

\subsection{Proof of Lemma \ref{S_2'}}\label{pf S_2'}

\begin{proof}[Proof of Lemma \ref{S_2'}]
Suppose $l \in \#S_2'(a;p^t)$. Then $\left(\frac{l^2-a}{p}\right)=1$ and
\bea \label{S'}
\#S_2'(a;p^t) & \ = \ & \frac{1}{2} \sum\limits_{l=0, (l^2-a,p)=1}^{p^t-1}  \left(\left(\frac{l^2-a}{p}\right)+1\right) \nonumber \\
& = & \frac{1}{2} \sum\limits_{k=0}^{p^{t-1}} \sum\limits_{l=0, l^2 \neq a \bmod p}^{p-1} \left(\left(\frac{(l+kp)^2-a}{p}\right)+1\right) \nonumber  \\
& = & \left( \sum\limits_{l=0, l^2\neq a}^{p-1} \left(\left(\frac{l^2-a}{p}\right)+1\right) \right) \frac{p^{t-1}}{2} \nonumber \\
& = & \left(-1+ \sum\limits_{l=0, l^2 \neq a \bmod p}^{p-1} 1 \right) \frac{p^{t-1}}{2},
\eea as (see for example page 63 of \cite{IR})
\bea
\sum\limits_{i=0}^{p-1} \left(\frac{i^2-a}{p}\right)=-1.
\eea
Substituting
\bea
\twocase{\sum\limits_{l=0\atop l^2  \neq a \bmod p}^{p-1} = }{p}{$\left(\frac{a}{p}\right)=1$}{p-2}{$\left(\frac{a}{p}\right)=-1$}
\eea
into \eqref{S'} gives us the desired result.
\end{proof}

\subsection{Proof of Lemma \ref{S''_2}}\label{pf S''_2}

We use the following fact in our proof.

\begin{prop}[Stangl \cite{St}] \label{Stangl}
Let $p$ be an odd prime. Then
\begin{center}
$\#\{k^2 \bmod p^t\}=\frac{p^{t+1}}{2(p+1)}+(-1)^{t-1}
\frac{p-1}{4(p+1)}+\frac{3}{4}$.
\end{center}
When $p=2$, we have that
\begin{center}
$\#\{k^2 \bmod 2^t\}=\frac{2^{t-1}}{3}+\frac{(-1)^{t-1}}{6}+\frac{3}{2}$.
\end{center}
\end{prop}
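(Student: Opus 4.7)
The plan is to count distinct quadratic residues modulo $p^t$ by stratifying $\{0,1,\ldots,p^t-1\}$ according to the $p$-adic valuation of the input and then running a geometric-sum calculation, handling the parities of $t$ and the prime $p=2$ separately.

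First I would partition inputs as $k=0$ (contributing the single square $0$) and $k=p^j u$ with $\gcd(u,p)=1$ and $0\le j<t$. Squaring gives $k^2=p^{2j}u^2$, so:
\begin{itemize}
\item If $2j\ge t$, then $k^2\equiv 0\bmod p^t$, contributing nothing new.
\item If $2j<t$, then $k^2\bmod p^t = p^{2j}\bigl(u^2\bmod p^{t-2j}\bigr)$, and distinct outputs are in bijection with distinct invertible squares modulo $p^{t-2j}$.
\end{itemize}
Thus if $N_p(s)$ denotes the number of squares in $(\Z/p^s\Z)^\ast$, the total is
\[
\#\{k^2\bmod p^t\} \;=\; 1+\sum_{j=0}^{\lceil t/2\rceil-1} N_p(t-2j).
\]

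Next I would compute $N_p(s)$. For $p$ odd, the squaring map on $(\Z/p^s\Z)^\ast$ has kernel $\{\pm 1\}$, so $N_p(s)=\phi(p^s)/2=(p-1)p^{s-1}/2$. Substituting and summing the geometric series in $p^{-2j}$ yields
\[
\#\{k^2\bmod p^t\} \;=\; 1+\frac{p-1}{2}\sum_{j=0}^{\lceil t/2\rceil-1} p^{t-2j-1},
\]
and I would evaluate this sum separately for $t$ even and $t$ odd. In both cases, algebraic simplification reduces the answer to
$\frac{p^{t+1}}{2(p+1)}+(-1)^{t-1}\frac{p-1}{4(p+1)}+\frac{3}{4}$; the sign $(-1)^{t-1}$ appears precisely because the last term of the geometric sum is $p^{t-1}$ when $t$ is odd but $p$ when $t$ is even, giving opposite-sign corrections to the closed form.

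For $p=2$, the same stratification works, but $N_2(s)$ is different because $(\Z/2^s\Z)^\ast\cong \Z/2\times\Z/2^{s-2}$ for $s\ge 3$, so the squaring map has kernel of size $4$ and $N_2(s)=2^{s-3}$ for $s\ge 3$, while $N_2(1)=N_2(2)=1$. I would carry out the analogous geometric sum (with the denominator $1-2^{-2}=3/4$ replacing the odd-prime denominator), verify the edge terms coming from small $t-2j$, and again split on parity of $t$ to obtain $\frac{2^{t-1}}{3}+\frac{(-1)^{t-1}}{6}+\frac{3}{2}$.

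The main obstacle is purely bookkeeping: keeping the parity cases straight, correctly treating the boundary values $j=\lceil t/2\rceil-1$ (where $t-2j$ may equal $1$ or $2$, and $N_p$ differs from the generic formula when $p=2$), and making sure that the geometric-series closed form is combined with the additive $+1$ for $k=0$ in a way that produces the precise $\frac{3}{4}$ and $\frac{3}{2}$ constants in the two cases. A sanity check on $t=1,2,3,4$ for each of $p=2$ and an odd prime (say $p=3$) would confirm the constants before presenting the general derivation.
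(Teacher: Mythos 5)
Your outline is correct, and it checks out: stratifying $\{0,\dots,p^t-1\}$ by the $p$-adic valuation $j$ of $k$, noting that the strata with $2j<t$ contribute disjoint sets of residues (they have distinct valuations $2j$) in bijection with the unit squares mod $p^{t-2j}$, and summing $N_p(t-2j)$ over $0\le j\le\lceil t/2\rceil-1$ plus $1$ for the residue $0$ does reproduce both closed forms. I verified the algebra in both parity cases for odd $p$ (e.g.\ for $t$ odd the sum telescopes to $1+\frac{p^{t+1}-1}{2(p+1)}=\frac{p^{t+1}}{2(p+1)}+\frac{2p+1}{2(p+1)}$, which matches $\frac{p-1}{4(p+1)}+\frac34$), and spot-checked $p=2$ for $t=1,\dots,5$ against direct enumeration. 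Note, however, that the paper does not prove this proposition at all: it is quoted as a known result of Stangl \cite{St}, so there is no in-paper argument to compare against; your derivation is the standard self-contained one (unit-group structure gives $N_p(s)=\phi(p^s)/2$ for odd $p$ and $N_2(s)=2^{s-3}$ for $s\ge 3$, then a geometric series). One small slip in your narration: the term of the sum at the boundary index $j=\lceil t/2\rceil-1$ is $p^{0}$ when $t$ is odd and $p^{1}$ when $t$ is even (not $p^{t-1}$ versus $p$ as you wrote); that is indeed the source of the $(-1)^{t-1}$ correction, and the computation as set up still comes out right.
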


\begin{proof}
As $\left(\frac{a}{p}\right)=-1$, the congruence $x^2-a\equiv 0 \bmod p$ has no solutions. Therefore $\#S''_2(a;p^t)=0$.

For the other claim, let $k\in S''_2(a;p^t)$ and assume $\left(\frac{a}{p}\right)=1$. Then $k^2-a \equiv p^2m^2 \bmod p^t$ for some $ m, 0\leq m < p^{t-2}$.
We have two cases.\\

\noindent \underline{Case 1:} Suppose $t\leq 2$. In this case, we have $m=0$, and since the congruence $x^2\equiv a \bmod p^t$ has exactly 2 solutions, we conclude that $\#S''_2(a;p^t)=2$.\\

\noindent \underline{Case 2:} Suppose $t \geq 3$. Consider
\begin{align}
g: S_2''(a;p^t) \rightarrow \{m^2 \bmod p^{t-2}\}
\end{align}
defined by $g(k)=m^2$. By Proposition \ref{solve}, for each $m^2 \in \{m^2 \bmod p^{t-2}\}$, the congruence $x^2 \equiv p^2m^2+a \bmod p^t$ is solvable with precisely two solutions. Hence $g$ is surjective and each element in the image of $g$ has exactly two distinct element in its pre-image. Therefore, since  $\#S''_2(a;p^t)=\#\{m^2 \bmod p^{t-2}\}$, by Theorem \ref{Stangl} we are done.
\end{proof}


\ \\

\end{document}